\theoremstyle{plain}
\newtheorem{lem}{Lemma}[section]
\newtheorem{cor}[lem]{Corollary}
\newtheorem{prop}[lem]{Proposition}
\newtheorem{thm}[lem]{Theorem}
\newtheorem*{mthm*}{Main Theorem}
\theoremstyle{definition}
\newtheorem{ex}[lem]{Example}
\newtheorem{para}[lem]{}
\newtheorem*{convention*}{Convention}
\newcommand{\catd}{\mathcal{D}}
\newcommand{\pd}{\operatorname{pd}}	
\newcommand{\gdim}{\mathrm{G}\text{-}\!\dim}
\newcommand{\id}{\operatorname{id}}	
\newcommand{\fd}{\operatorname{fd}}
\newcommand{\cidim}{\mathrm{CI}\text{-}\!\dim}
\newcommand{\depth}{\operatorname{depth}}	
\newcommand{\rank}{\operatorname{rank}}	
\newcommand{\edim}{\operatorname{edim}}
\newcommand{\len}{\operatorname{len}}
\newcommand{\cx}{\operatorname{cx}}
\newcommand{\rhom}{\mathbf{R}\!\operatorname{Hom}}	
\newcommand{\lotimes}{\otimes^{\mathbf{L}}}
\newcommand{\HH}{\operatorname{H}}
\newcommand{\s}{\mathfrak{S}}
\newcommand{\im}{\operatorname{Im}}
\newcommand{\shift}{\mathsf{\Sigma}}
\newcommand{\card}{\operatorname{card}}
\newcommand{\Ker}{\operatorname{Ker}}
\newcommand{\ideal}[1]{\mathfrak{#1}}
\newcommand{\m}{\ideal{m}}
\newcommand{\fm}{\ideal{m}}
\newcommand{\ti}{\tilde}
\newcommand{\comp}[1]{\widehat{#1}}
\newcommand{\ol}{\overline}
\newcommand{\wti}{\widetilde}
\newcommand{\bbz}{\mathbb{Z}}
\newcommand{\bbc}{\mathbb{C}}
\newcommand{\from}{\leftarrow}
\newcommand{\xra}{\xrightarrow}
\newcommand{\xla}{\xleftarrow}
\newcommand{\vf}{\varphi}
\newcommand{\y}{\mathbf{y}}
\newcommand{\x}{\mathbf{x}}
\renewcommand{\geq}{\geqslant}
\renewcommand{\leq}{\leqslant}
\renewcommand{\ker}{\Ker}
\newcommand{\Ext}[4][R]{\operatorname{Ext}_{#1}^{#2}(#3,#4)}	
\newcommand{\Rhom}[3][R]{\mathbf{R}\!\operatorname{Hom}_{#1}(#2,#3)}
\newcommand{\Hom}{\operatorname{Hom}}	
\newcommand{\Tor}[4][R]{\operatorname{Tor}^{#1}_{#2}(#3,#4)}
\newcommand{\ssm}{\smallsetminus}
\def\Tor{\operatorname{Tor}}
\def\Ext{\operatorname{Ext}}
\def\m{\mathfrak{m}}
\def\edim{\operatorname{edim}}
\numberwithin{equation}{lem}
\begin{document}

\bibliographystyle{amsplain}

\title[]{Ring homomorphisms and local rings with quasi-decomposable maximal ideal}

\author[S. Nasseh]{Saeed Nasseh}

\address{Department of Mathematical Sciences,
Georgia Southern University,
Statesboro, Georgia 30460, USA}

\email{snasseh@georgiasouthern.edu}
\urladdr{https://cosm.georgiasouthern.edu/math/saeed.nasseh}

\author{Keri Ann Sather-Wagstaff}
\address{School of Mathematical and Statistical Sciences,
Clemson University,
O-110 Martin Hall, Box 340975, Clemson, S.C. 29634
USA}
\email{ssather@clemson.edu}
\urladdr{https://ssather.people.clemson.edu/}

\author[R. Takahashi]{Ryo Takahashi}
\address{Graduate School of Mathematics\\
Nagoya University\\
Furocho, Chikusaku, Nagoya, Aichi 464-8602, Japan}
\email{takahashi@math.nagoya-u.ac.jp}
\urladdr{http://www.math.nagoya-u.ac.jp/~takahashi/}

\thanks{K. A. Sather-Wagstaff was supported by the National Science Foundation under Grant No.\ EES-2243134.
Any opinions, findings, and conclusions or recommendations expressed in this material are those of the author and do not necessarily reflect the views of the National Science Foundation. R. Takahashi was partly supported by JSPS Grants-in-Aid for Scientific Research 23K03070.}



\keywords{Cohen-Macaulay, complete intersection, decomposable maximal ideal, dualizing complex, Ext-friendly, fiber product, Gorenstein, hypersurface, injective dimension, local ring homomorphism, projective dimension, quasi-decomposable maximal ideal, semidualizing complexes, Tor-friendly, totally reflexive}
\subjclass[2020]{13D02, 13D05, 13D07, 13H10, 18A30}

\begin{abstract}
The notion of local rings with quasi-decomposable maximal ideal was formally introduced by Nasseh and Takahashi.
In separate works, the authors of the present paper show that such rings have rigid homological properties; for instance, they are both Ext- and Tor-friendly.
One point of this paper is to further explore the homological properties of these rings and also introduce new classes of such rings from a combinatorial point of view. 
Another point is to investigate how far some of these homological properties can be pushed along certain diagrams of local ring homomorphisms.
\end{abstract}

\maketitle


\section{Introduction}\label{sec161010a}
\begin{convention*}
Throughout the paper, $(R,\m_R,k)$ is a commutative noetherian local ring and $\comp R$ denotes the completion of $R$ in the $\m_R$-adic topology. If $R=\comp R$, then we say that $R$ is complete. By a ``fiber product ring'' we mean a fiber product of the form $S\times_kT$, where $S$ and $T$ are commutative noetherian local rings with a common residue field $k$ such that $S\neq k\neq T$; see~\ref{defn20230805j} for the definition and notation.
\end{convention*}

Ogoma~\cite{ogoma:edc} observed that the class of local rings with decomposable maximal ideal coincides with that of the fiber product rings; see~\ref{para20230518a} for details. The history of such rings goes back quite far because of their interesting properties and numerous applications; see, for instance, the works of Kostrikin and Šafarevič~\cite{kostrikin}, Dress and Krämer~\cite{dress}, Lescot~\cite{lescot:sbpfal}, Ogoma~\cite{ogoma:fpnra, ogoma}, and also the work of the authors and VandeBogert~\cite{nasseh:ahplrdmi}.
In recent years, further progress has been made on the structure and homological properties of these rings, as we explain after the next paragraph.\vspace{4pt}

Following~\cite{avramov:phcnr}, the local ring $R$ is called \emph{Ext-friendly} (resp. \emph{Tor-friendly}) if for every pair $(M,N)$ of finitely generated $R$-modules, the condition $\Ext^i_R(M,N)=0$ (resp. $\Tor^R_i(M,N)=0$) for $i\gg 0$ implies that $\pd_R(M)<\infty$ or $\id_R(N)<\infty$ (resp. $\pd_R(M)<\infty$ or $\pd_R(N)<\infty$).
By~\cite[Propositions 2.2 and 5.5]{avramov:phcnr}, Tor-friendliness implies Ext-friendliness. Also, an Ext-friendly ring $R$ satisfies the \emph{Auslander-Reiten Conjecture} that states if $\Ext^i_R(M,M\oplus R)=0$ for a finitely generated $R$-module $M$ and all $i\geq 1$, then $M$ is free; see~\cite{AR} for the history of this conjecture.\vspace{4pt}

In~\cite[Theorem 3.1]{nasseh:oeire}, Nasseh and Yoshino showed that a fiber product ring of the form $S\times_k\left(k[x]/(x^2)\right)$ is Tor-friendly (hence, Ext-friendly). A few years later, Nasseh and Sather-Wagstaff~\cite[Theorem 1.1]{nasseh:vetfp} generalized this result by proving that any fiber product ring of the form $S\times_kT$ is Tor-friendly.\footnote{Another generalization of the result of Nasseh and Yoshino~\cite[Theorem 3.1]{nasseh:oeire} to the differential graded homological algebra setting is found in~\cite[Theorem 4.1]{AINSW}.} Furthermore, Nasseh and Takahashi~\cite[Theorem A]{nasseh:lrqdmi} proved that the maximal ideal of a fiber product ring is always a direct summand of a direct sum of certain syzygies of finitely generated modules of infinite projective dimension. Several other properties and applications of these rings have also been studied in~\cite{Ananth, christensen:gmirlr, Goto, WW, Hugh, Iarrobino, moore, NT, NV, takahashi:dssmrcf}.\vspace{4pt}

Although, fiber product rings (or, local rings with decomposable maximal ideal) have nice properties and applications, there are two particular vexing facts about them. First of all, these rings are not integral domain; see our discussion in~\ref{para20230817a}. Second of all, by~\cite{lescot:sbpfal}, depth of such rings is always $\leq 1$, while their Krull dimension can be any positive integer. Therefore, a randomly given fiber product ring is most likely non-Cohen-Macaulay; see also~\cite[Fact 2.2]{nasseh:ahplrdmi}. These facts motivated Nasseh and Takahashi~\cite{nasseh:lrqdmi} to consider a more general version of such rings, namely, the class of local rings that are \emph{deformations} of fiber product rings. Such rings are called local rings with \emph{quasi-decomposable} maximal ideal.\vspace{4pt}

Several classes of Cohen-Macaulay and non-Gorenstein local rings with quasi-decomposable maximal ideal that are integral domain have been introduced in~\cite{nasseh:lrqdmi}. Such classes include certain numerical semigroup rings as well as Cohen-Macaulay singular local rings with infinite residue field and minimal multiplicity (e.g., $2$-dimensional non-Gorenstein normal local domains with a rational
singularity); see Example~\ref{ex20230826a} for more details. In Section~\ref{sec171118b}, we prove the following result that introduces new classes of both Cohen-Macaulay and non-Cohen-Macaulay local rings with
quasi-decomposable maximal ideal from a combinatorial point of view. (See~\ref{defn171101a} for the terminology.)
\begin{thm}\label{thm20230825a}
Let $G$ be a finite simple graph on $n$ vertices with $v_n$ a star vertex.
\begin{enumerate}[\rm(a)]
\item\label{prop171124a}
The complete local ring $k[\![\shift G]\!]$ over the field $k$ is Cohen-Macaulay of dimension $n$ with quasi-decomposable maximal ideal.
\item\label{prop171124b}
Let $\wti G$ be obtained from
$G$ by adding a whisker to  each vertex except for $v_n$.
Then, the complete local ring $k[\![\wti G]\!]$ over the field $k$ has dimension $n$, depth $n-1$, and quasi-decomposable maximal ideal.
\end{enumerate}
\end{thm}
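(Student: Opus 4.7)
The plan for both parts is to unpack the explicit presentation of $k[\![\shift G]\!]$ and $k[\![\wti G]\!]$ from Definition~\ref{defn171101a} as a quotient of a power series ring by a monomial ideal, and then exploit the star-vertex property at $v_n$ to produce an explicit regular sequence whose quotient exhibits a fiber product structure over $k$. The star-vertex condition is crucial because it means every other vertex is adjacent to $v_n$, so the edges of $G$ split cleanly into those incident to $v_n$ and those living entirely in the subgraph $G\setminus v_n$; this dichotomy is what will eventually produce the decomposition of the maximal ideal after a suitable deformation.

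For part~\eqref{prop171124a}, I would first verify that $\dim k[\![\shift G]\!]=n$ from the combinatorial description. I would then look for a regular sequence $\underline x=x_1,\ldots,x_n$ (typically built from linear combinations of the variables corresponding to the vertices, tailored so that $v_n$'s contribution is singled out) such that the quotient $k[\![\shift G]\!]/(\underline x)$ is an Artinian local ring whose maximal ideal decomposes as $I\oplus J$, where $I$ is the image of the ideal generated by variables tied to $v_n$ and $J$ is the image of the ideal coming from $G\setminus v_n$. By~\ref{para20230518a} this makes the quotient a fiber product ring, so $k[\![\shift G]\!]$ is a deformation of a fiber product ring, i.e., has quasi-decomposable maximal ideal. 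Since the chosen sequence has length $n=\dim k[\![\shift G]\!]$, simultaneous regularity delivers the Cohen-Macaulay conclusion.

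For part~\eqref{prop171124b}, each whisker attached at $v_i$ for $i<n$ provides a new variable $w_i$ and a relation of the form $x_iw_i=0$. The plan is to use these $n-1$ whisker-pairs to build a regular sequence of length $n-1$: informally, the element $x_i-w_i$ (or a similar combination depending on the monomial convention used in Definition~\ref{defn171101a}) should be regular on the ring, since each whisker variable appears only in a single quadratic relation. This yields depth $\geq n-1$. The dimension remains $n$ by a direct count of the Krull dimension of the Stanley-Reisner-type ring attached to $\wti G$, and the strictness $\depth<n$ must be witnessed by the absence of a whisker at $v_n$: after modding out by the $n-1$-element regular sequence one gets a ring of dimension~$1$ with a non-regular socle element coming from the star vertex, so it is not Cohen-Macaulay. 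Finally, quasi-decomposability is established by continuing to mod out along the star vertex to reduce to the same fiber product decomposition as in part~\eqref{prop171124a}.

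The main obstacle I expect is the verification step rather than the strategy itself: after writing down the candidate regular sequence, one must rigorously check regularity (not merely that it is a system of parameters) and confirm that the resulting Artinian quotient's maximal ideal decomposes as a direct sum in the precise sense of~\ref{defn20230805j}. This typically requires a careful monomial-basis argument, separating the monomials involving only variables from $v_n$'s neighborhood from those involving only variables of $G\setminus v_n$, and showing that no mixed monomials survive modulo the sequence. For part~\eqref{prop171124b}, the additional difficulty is ensuring the whiskers interact well with the chosen sequence, which may require an inductive argument on $n$ or on the number of edges of $G$.
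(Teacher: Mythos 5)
Your overall strategy is the same as the paper's: deform the whiskered ring by vertex-minus-whisker differences and use the star vertex $v_n$ to split the maximal ideal of the quotient. But there is a genuine gap at exactly the point you flag as "the main obstacle": you never establish that your candidate sequence is regular, nor do you identify the quotient ring, and that verification is the entire substance of the theorem. The paper does not do a monomial-basis computation; it quotes Villarreal's results that the whiskered edge ideal $I(\shift G)$ is Cohen--Macaulay with $\dim k[\shift G]=n$, and then uses polarization to see that $v_1-w_1,\ldots,v_n-w_n$ is a $k[\shift G]$-regular (hence $k[\![\shift G]\!]$-regular) sequence with quotient the artinian ring $k[G]'=k[v_1,\ldots,v_n]/(I(G),v_1^2,\ldots,v_n^2)$; the star vertex then gives $k[G]'\cong k[\![v_n]\!]/(v_n^2)\times_k k[H]'$, where $H$ is induced on $v_1,\ldots,v_{n-1}$. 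Without either these citations or an actual proof of regularity, your argument for part~(a) is a plan, not a proof. A related slip: you say the length-$n$ sequence is "built from linear combinations of the variables corresponding to the vertices," but the vertex variables alone cannot work (killing all of them leaves $k[\![w_1,\ldots,w_n]\!]$, of dimension $n$); the whisker variables must enter, as in $v_i-w_i$, and the star vertex plays no role in the choice of the sequence, only in the decomposition of the quotient.

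For part~(b) there is a step that would fail as stated. Modding $k[\![\wti G]\!]$ out by $v_i-w_i$, $i<n$, gives $k[G]''=k[v_1,\ldots,v_n]/(I(G),v_1^2,\ldots,v_{n-1}^2)$, and since $v_n$ is a star vertex one has $v_nv_i=0$ for all $i<n$ while $v_n^2\neq 0$, so $\m_{k[G]''}=(v_n)\oplus(v_1,\ldots,v_{n-1})$ is already decomposable, i.e., $k[G]''\cong k[v_n]\times_k k[H]'$. Your proposal to "continue to mod out along the star vertex to reduce to the same fiber product decomposition as in part~(a)" cannot be carried out: $k[G]''$ has depth $0$ (its maximal ideal consists of zerodivisors, as the fiber product with an artinian factor shows), so no further regular element exists, and in particular one cannot pass from $k[G]''$ to $k[G]'$ by a deformation. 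Fortunately no further step is needed. The dimension and depth counts in (b) then come from the fiber product structure of $k[G]''$ (dimension $1$, depth $0$, by the cited Fact 2.2 of the decomposable-maximal-ideal paper) together with the length-$(n-1)$ regular sequence, rather than from a socle element "coming from the star vertex." Here again the regularity of that sequence, which you defer, is the point that must be proved (the paper gets it by polarization), since $k[\![\wti G]\!]$ is not Cohen--Macaulay and so regularity cannot be inferred from a system-of-parameters count.
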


As one might expect, local rings with quasi-decomposable maximal ideal have
rigid homological properties like those of the fiber product rings. For instance, these rings are Tor-friendly (hence, Ext-friendly) by~\cite[Corollaries 6.5 and 6.8]{nasseh:lrqdmi}; see also~\cite{ryo}, where Takahashi studies these rings as a special case of, so-called, dominant local rings.
One point of the present paper is to further explore the homological properties of such rings. Therefore, from this point of view, a part of this paper can be considered as an addendum to~\cite{nasseh:vetfp, nasseh:lrqdmi}. For instance, among other results in this direction we prove the following theorem in Section~\ref{sec161010b} which is a generalization of~\cite[Corollary 2.7 and Fact 2.9]{nasseh:ahplrdmi}.

\begin{thm}\label{cor161010bz}
If $\fm_R$ is quasi-decomposable, then $R$ is Gorenstein if and only if it is a hypersurface.
Moreover, if these equivalent conditions are satisfied, then $\dim(R)\geq 1$ and $e(R)\leq 2$. (Here, $e(R)$ is the Hilber-Samuel multiplicity of $R$.)
\end{thm}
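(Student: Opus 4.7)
The plan is to reduce the statement to the fiber-product case, which is precisely \cite[Corollary~2.7 and Fact~2.9]{nasseh:ahplrdmi}. By quasi-decomposability, fix an $R$-regular sequence $\mathbf{x} = x_1, \dots, x_c \subseteq \fm_R$ such that $\bar R := R/(\mathbf{x})$ is a fiber product $S \times_k T$ with $S \neq k \neq T$. The implication ``hypersurface $\Rightarrow$ Gorenstein'' is classical, so I focus on the converse.

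Since $\mathbf{x}$ is $R$-regular, $R$ is Gorenstein if and only if $\bar R$ is Gorenstein. The cited Corollary~2.7 then shows that the Gorenstein fiber product $\bar R$ is a hypersurface. To lift the hypersurface property back to $R$, I use the standard characterization: a Cohen--Macaulay local ring $A$ is a hypersurface if and only if $\edim A - \dim A \leq 1$. From the identities $\dim \bar R = \dim R - c$ (because $\mathbf{x}$ is a regular sequence) and $\edim \bar R \geq \edim R - c$ (because modding out $c$ elements shrinks embedding dimension by at most $c$), I obtain
$$\edim R - \dim R \;\leq\; \edim \bar R - \dim \bar R \;\leq\; 1.$$
Since $R$ is Gorenstein, hence Cohen--Macaulay, this forces $R$ to be a hypersurface.

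For the moreover part, \cite[Fact~2.9]{nasseh:ahplrdmi} gives $\dim \bar R \geq 1$ and $e(\bar R) \leq 2$. The dimension bound yields $\dim R = \dim \bar R + c \geq 1$ at once. For the multiplicity, I invoke the standard fact that if $R$ is Cohen--Macaulay and $\mathbf{x}$ is an $R$-regular sequence in $\fm_R$, then $e(R) \leq e(R/(\mathbf{x}))$ (with equality when $\mathbf{x}$ is superficial); applying this gives $e(R) \leq e(\bar R) \leq 2$.

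The principal technical step is the multiplicity inequality $e(R) \leq e(R/(\mathbf{x}))$, a well-known consequence of the theory of superficial sequences but worth citing precisely (for instance, via the Hilbert--Samuel material in Bruns--Herzog). Alternatively, one may arrange $\mathbf{x}$ to be part of a minimal generating set of $\fm_R$ and carry out the bound via a Cohen presentation $\comp R \cong Q/(f)$, comparing $\operatorname{ord}_{\fm_Q}(f)$ with the order of its image modulo a lift of $\mathbf{x}$ into a regular system of parameters of $Q$; all other ingredients assemble from classical deformation facts.
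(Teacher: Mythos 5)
Your proof is correct and follows essentially the same route as the paper's: reduce to the fiber product $\ol R = R/(\mathbf{x})$, invoke Proposition~\ref{cor161010b} for $\ol R$, and then transfer the bounds $\edim - \dim \leq 1$ and $e(\cdot)\leq 2$ back to $R$ along the regular sequence. The only cosmetic difference is that the paper computes $\edim(\ol R)=2$ explicitly from $\m_{\ol R}=\m_S\oplus\m_T$ with $S,T$ one-dimensional regular, while you instead use the general characterization $\edim(\ol R)-\dim(\ol R)\leq 1$ for the hypersurface $\ol R$; these amount to the same thing.
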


Another point of the present paper is as follows: local rings which are somewhat similar can be distinguished by the property of having quasi-decomposable maximal ideal (or not); see, for instance, Examples~\ref{ex20220511a} and~\ref{ex20220511b} and their subsequent paragraph. This persuades us to consider a relaxed version of the quasi-decomposable maximal ideal condition in some results of this paper. More precisely, we will
investigate how far we can push some of our results along certain diagrams of local ring homomorphisms starting with any local ring and ending with a local ring that has quasi-decomposable maximal ideal. The first result of this series is the following that generalizes part of Theorem~\ref{cor161010bz} and is proven in Section~\ref{sec161010b}.

\begin{thm}\label{prop171118a}
Assume $R$ is Gorenstein.
The following conditions are equivalent.
\begin{enumerate}[\rm(i)]
\item \label{prop171118a1}
$R$ is a complete intersection.
\item \label{prop171118a2}
$R$ admits a diagram of deformations
$R\to R'\from R''$ such that $R''$ has quasi-decomposable maximal ideal.
\item \label{prop171118a3}
$\comp R$ admits a deformation $\comp R\from Q$, where $Q$ has quasi-decomposable maximal ideal.
\item \label{prop171118a4}
There exists a quasi-Gorenstein local ring homomorphism $R\to\ti R$ of finite complete intersection dimension such that
$\ti R$ admits a finite sequence
$$
\ti R\from R_1\to R_2\from\cdots\to R_n
$$
of deformations in which $R_n$ has decomposable maximal ideal.
\item \label{prop171118a5}
There exists a quasi-Gorenstein local ring homomorphism $R\to\ti R$ of finite complete intersection dimension such that
$\ti R$ admits a finite sequence
\begin{equation}\label{eq20230519a}
\ti R\from R_1\to R_2\from\cdots\to R_n
\end{equation}
of complete intersection local ring homomorphisms in which $R_n$ has quasi-decomposable maximal ideal.
\end{enumerate}
\end{thm}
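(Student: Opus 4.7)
The plan is to establish the equivalence through the implications (i)$\Rightarrow$(ii),(iii),(iv) via explicit constructions based on the Cohen structure theorem, the easy reductions (ii)$\Rightarrow$(iv), (iii)$\Rightarrow$(v), and (iv)$\Rightarrow$(v) (by inserting the given diagram into a longer chain and using that deformations are CI homomorphisms with ``decomposable'' implying ``quasi-decomposable''), and finally (v)$\Rightarrow$(i) via Theorem~\ref{cor161010bz} together with descent of the complete intersection property along local ring homomorphisms of finite CI dimension. The unifying observation driving the constructions is that every regular local ring $P = k[[x_1,\ldots,x_d]]$ of dimension $d \geq 2$ is itself quasi-decomposable, because $(x_1x_2, x_3, \ldots, x_d)$ is a $P$-regular sequence whose quotient is the fiber product $k[[x_1,x_2]]/(x_1x_2) = k[[x_1]] \times_k k[[x_2]]$.

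For the forward implications, since $R$ is CI so is $\widehat R$, and Cohen's theorem gives $\widehat R = \widehat P/(\underline f)$ with $\widehat P$ regular and $\underline f$ a $\widehat P$-regular sequence; after adjoining dummy formal power-series variables we may arrange $\dim \widehat P \geq 2$. For (iii), take $Q = \widehat P[[y,z]]$: this is regular of dimension $\geq 2$, hence quasi-decomposable by the above, and the surjection $Q \twoheadrightarrow \widehat R$ sending $y, z \mapsto 0$ and modding out $\underline f$ has kernel $(y, z, \underline f)$, which is a $Q$-regular sequence. For (iv) (and a fortiori (v)), set $\tilde R = \widehat R$; the completion map $R \to \widehat R$ is faithfully flat with closed fiber $k$, hence quasi-Gorenstein of finite CI dimension (in fact zero), and the two-step zigzag $\widehat R \twoheadleftarrow \widehat P \twoheadrightarrow k[[x_1,x_2]]/(x_1x_2)$ is a chain of two deformations (the second via the $\widehat P$-regular sequence $(x_1x_2, x_3, \ldots, x_d)$) whose endpoint has decomposable maximal ideal. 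For (ii), apply the same idea to the Artinian CI ring $R' := R/(\underline h)$, where $\underline h$ is a maximal regular sequence in $R$: Artinianity forces completeness, so Cohen's theorem yields $R' = \widehat S/(\underline g)$ with $\widehat S$ regular, and $R'' := \widehat S[[y,z]]$ is regular of dimension $\geq 2$ (hence quasi-decomposable) and surjects onto $R'$ via the $R''$-regular sequence $(y, z, \underline g)$.

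For the converses, the linchpin is Theorem~\ref{cor161010bz}. Under any of (ii)--(v), the Gorenstein hypothesis on $R$ propagates through the chain---Gorensteinness is both preserved and lifted by deformations, and is transferred along the quasi-Gorenstein homomorphism $R \to \tilde R$ in (iv)--(v)---so the quasi-decomposable terminus ($R''$ in (ii), $Q$ in (iii), or $R_n$ in (iv)--(v)) is Gorenstein, and Theorem~\ref{cor161010bz} then forces it to be a hypersurface, in particular a complete intersection. The CI property then travels back along the chain, using that deformations both preserve and lift CI and that local ring homomorphisms of finite CI dimension do likewise by Avramov--Foxby, yielding CI for $\tilde R$ in (iv)--(v) or for $\widehat R$ in (ii)--(iii). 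Finally, CI descends from $\tilde R$ to $R$ along the finite-CI-dimension homomorphism in (iv)--(v), and from $\widehat R$ to $R$ via the faithful flatness of completion in (ii)--(iii).

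The step I expect to be most delicate is the iterated transfer of Gorensteinness and of the complete intersection property along the zigzag chain in (iv)--(v): the two properties must travel in opposite directions through alternating arrows, and this requires careful application of the standard ascent/descent lemmas for deformations and for local homomorphisms of finite CI dimension. Once this bookkeeping is in place, the remainder of the argument is a matter of assembling the forward constructions and applying Theorem~\ref{cor161010bz} at the quasi-decomposable terminus.
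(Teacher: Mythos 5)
Your proposal is correct and follows essentially the same route as the paper's proof. You use the same core ingredients — Cohen's structure theorem to produce regular deformations, the observation that a regular local ring of dimension $\geq 2$ has quasi-decomposable maximal ideal (this is exactly Proposition~\ref{disc171118a}), and the chain $(v)\Rightarrow(i)$ argument that passes Gorensteinness forward to the quasi-decomposable endpoint $R_n$, applies Theorem~\ref{cor161010bz} to conclude $R_n$ is a hypersurface, and then brings the complete intersection property back along the zigzag via \ref{defn20230805e} and \ref{defn171120f}. The only cosmetic differences are that you prove $(i)\Rightarrow(iv)$ directly by setting $\tilde R=\comp R$ rather than routing through $(ii)$, and you pad the regular ring with dummy variables $y,z$ to guarantee dimension $\geq 2$ where the paper simply observes that the Cohen presentation can be chosen with $\dim R''\geq 2$. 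One small imprecision worth cleaning up: in the $(v)\Rightarrow(i)$ step you speak of "deformations" preserving and lifting CI, but the arrows in $(v)$ are general complete intersection local ring homomorphisms; the right statements are \ref{defn20230805e} (ascent along CI homs) and \ref{defn171120f} (descent along homs of finite CI dimension), exactly as the paper invokes them.
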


A generalized version of a conjecture of Tachikawa~\cite[Chapter 8]{Tachikawa} in commutative algebra, which is a special case of the Auslander-Reiten Conjecture, states that if $R$ is Cohen-Macaulay with a canonical module $\omega$, then $\Ext^i_R(\omega,R)=0$ for all $i\geq 1$ implies that $R$ is Gorenstein; see~\cite{ABS} for the history of this conjecture.
In the following result, which is a souped up version of Proposition~\ref{prop171123b}, we study Tachikawa's Conjecture along a diagrams of local ring homomorphisms.

\begin{thm}\label{prop171123c}
A singular local ring $R$ is Gorenstein if any of the following holds.
\begin{enumerate}[\rm(a)]
\item\label{prop171123c1}
There exists
a non-zero finitely generated $R$-module $N$ with $\id_R(N)<\infty$ such that $\Ext^{i}_R(N,R)=0$ for  $i\gg 0$,
and there is a diagram of local ring homomorphisms $R\xra\vf R'\xla\psi S$ such that
$\vf$ is a composition of flat local maps and deformations,
$\psi$ is a deformation,
and $S$ has quasi-decomposable maximal ideal.
\item\label{prop171123c2}
There exists a finitely generated $R$-module $M$ with $\pd_R(M)=\infty$ such that
$\Ext^{i}_R(M,R)=0$ for  $i\gg 0$,
and there is a local ring homomorphism $R\xra\vf S$  that
is a composition of flat local maps and deformations
and $S$ has quasi-decomposable maximal ideal.
\end{enumerate}
\end{thm}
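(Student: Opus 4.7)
The plan for both parts is to reduce to the non-diagram version (Proposition~\ref{prop171123b}) applied at $S$, and then descend Gorensteinness back through the given ring homomorphisms to $R$. The key inputs are the standard change-of-rings results (flat base change for Ext and injective dimension; Koszul computations for surjective deformations) together with the fact that the Gorenstein property ascends and descends along flat local maps with Gorenstein closed fibers and along surjective deformations by regular sequences.

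For part~(b), I would write $\vf$ as a finite composition $R=A_0\to A_1\to\cdots\to A_r=S$ in which each arrow is either flat local or a surjective deformation $A_i\onto A_{i+1}$, and set $M_0=M$ and $M_{i+1}=M_i\lotimes_{A_i}A_{i+1}\in\catdfb(A_{i+1})$. Inductively I would verify that $\pd_{A_{i+1}}(M_{i+1})=\infty$ and $\ext^n_{A_{i+1}}(M_{i+1},A_{i+1})=0$ for $n\gg 0$. For flat base change both facts follow from the identities $\pd_B(M\otimes_A B)=\pd_A(M)$ and $\ext^n_B(M\otimes_A B,B)\cong\ext^n_A(M,A)\otimes_A B$. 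For a surjective deformation $A\onto B=A/(\underline{x})$ with $\underline{x}$ an $A$-regular sequence of length $c$, $\pd$ is preserved via a Koszul computation, and the Ext-vanishing follows from the adjunction
\[
\rhom_B(M\lotimes_A B,B)\simeq\rhom_A(M,B)
\]
combined with the short exact sequences $0\to B_{j-1}\xrightarrow{x_j} B_{j-1}\to B_j\to 0$ for the intermediate quotients $B_j=A/(x_1,\dots,x_j)$, which inductively convert eventual vanishing of $\ext^n_A(M,A)$ into that of $\ext^n_A(M,B)$. Replacing $M'=M_r$ by a sufficiently high syzygy module, Ext-friendliness of $S$ from~\cite[Corollaries 6.5 and 6.8]{nasseh:lrqdmi} forces $\id_S(S)<\infty$; that is, $S$ is Gorenstein. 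Since Gorensteinness descends both along flat local maps (Avramov--Foxby) and along surjective deformations, iterating through the factors of $\vf$ yields that $R$ is Gorenstein.

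For part~(a) the same strategy applies, with $N$ in place of $M$, $\id$ in place of $\pd$, and an additional restriction step via $\psi$. First transfer $N$ along $\vf$ to a complex $N_1\in\catdfb(R')$; finite injective dimension is preserved by flat base change (given the standard Gorenstein-closed-fiber hypothesis) and by the deformation formula $\id_A(X)=\id_B(X\lotimes_A B)+c$, while eventual Ext-vanishing is preserved as above. Next view $N_1$ as a complex of $S$-modules via the surjective deformation $\psi\colon S\onto R'$; the same $\id$ formula gives $\id_S(N_1)<\infty$, and change of rings yields $\ext^n_S(N_1,S)=0$ for $n\gg 0$. After replacing $N_1$ by a suitable module representative, Proposition~\ref{prop171123b}(a) applied at $S$ gives that $S$ is Gorenstein. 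Descent of Gorensteinness through $\psi$ makes $R'$ Gorenstein, and iterated descent through $\vf$ makes $R$ Gorenstein.

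The main obstacle, I expect, is the careful verification that all of the relevant homological invariants genuinely survive each individual factor of $\vf$, especially under a deformation $A_i\onto A_{i+1}$ where the defining regular sequence need not be regular on the transferred witness. This forces one to work with complexes at intermediate stages and then to produce an honest finitely generated $S$-module on which to apply Proposition~\ref{prop171123b}. A secondary subtle point in part~(a) is that the transferred complex $N_1$ must remain nonzero; this is automatic under faithfully flat base change, but requires a separate argument in the deformation case.
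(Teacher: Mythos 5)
Your part~(b) argument is essentially fine and close to the paper's: you push the witness down the factorization of $\vf$ (Betti numbers, hence infinite projective dimension, survive derived base change along both flat steps and deformations, and Ext-vanishing survives by the adjunctions you state), apply the dichotomy over $S$ coming from quasi-decomposability \cite[Corollaries 6.5 and 6.8]{nasseh:lrqdmi} to get $S$ Gorenstein, and descend Gorensteinness back along flat local maps and deformations; the paper propagates the dichotomy ``$\pd$ finite or Gorenstein'' back up instead, which amounts to the same thing.

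Part~(a), however, has a genuine gap. Your plan is to transport \emph{both} hypotheses on $N$ --- finite injective dimension and eventual Ext-vanishing --- forward along $\vf$ and $\psi$, and then to prove that $S$ itself is Gorenstein and descend. The deformation steps are fine (your Bass-number formula $\id_A(X)=\id_B(X\lotimes_AB)+c$ is correct, via $\rhom_A(B,X)\simeq\shift^{-c}(X\lotimes_AB)$), but the flat steps are not: finiteness of injective dimension ascends along a flat local map only when the closed fibre is Gorenstein, a hypothesis the theorem does not impose and which you yourself have to smuggle in parenthetically. A flat local map such as $A\to A[\![y,z]\!]/(y^2,yz,z^2)$ destroys finite injective dimension of every nonzero finitely generated module, so your transferred complex need not have finite injective dimension over $R'$ or $S$, and nothing in hypothesis~(a) forces $S$ to be Gorenstein when the flat steps of $\vf$ have non-Gorenstein fibres; so the intermediate goal ``$S$ is Gorenstein'' is both unreachable by your route and stronger than what the hypotheses support. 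The paper's proof is structured precisely to avoid this: it never moves $\id_R(N)<\infty$ off of $R$. It transfers only the Ext-vanishing down the diagram (passing to a syzygy at each codimension-one deformation step), uses \cite[Corollary 6.8]{nasseh:lrqdmi} over the far ring merely to conclude finiteness of G-dimension --- both branches of the dichotomy ($\pd$ finite, or the ring Gorenstein) give this, so no case analysis is needed --- then pulls finite G-dimension back up to $R$, which works for arbitrary flat local maps and deformations \cite{lars,auslander:smt} with no condition on fibres, and finally combines $\gdim_R(N)<\infty$ with the given $\id_R(N)<\infty$ via Holm's theorem \cite[Theorem 3.2]{henrik} to conclude that $R$ is Gorenstein. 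To repair part~(a) you would need to restructure it along these lines (or add a Gorenstein-fibre assumption on the flat maps, which would weaken the theorem).
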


Finally, following the theme of Theorems~\ref{prop171118a} and~\ref{prop171123c}, our goal in Section~\ref{sec170206a} is to study the cardinality of the set $\s(R)$ that consists of shift-isomorphism classes of semidualizing $R$-complexes along diagrams of local ring homomorphisms. The set $\s(R)$ is known to be a finite set that, in general, can be big.
However, our main result in Section~\ref{sec170206a}, stated next, shows that under the existence of certain diagrams of local ring homomorphisms this set is small.

\begin{thm}\label{cor170213a}
Assume that $R$ admits a diagram of local ring homomorphisms
$$R=R_0\to R_1\from R_2\to\cdots\from R_n$$
such that $R_n$ has quasi-decomposable maximal ideal.
Assume that each leftward pointing map
is complete intersection such that the induced map on residue fields is an isomorphism.
Assume further that each rightward pointing map has finite complete intersection dimension.
Then $\card(\s(R))\leq 2$.
\end{thm}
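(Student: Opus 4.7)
The plan is to propagate the bound $\card(\s(R_n))\le 2$ backward along the zigzag $R_0 \to R_1 \from R_2 \to \cdots \from R_n$ using functoriality of $\s$ under the two kinds of local ring homomorphisms in the diagram. The base case holds because $R_n$ has quasi-decomposable maximal ideal, so by definition there is a deformation $R_n \onto Q$ with $Q$ a non-trivial fiber product $S\times_k T$. Prior work in the spirit of~\cite{nasseh:vetfp} shows that the only shift-isomorphism classes of semidualizing complexes over such a fiber product are the ring itself and the dualizing complex (when it exists), giving $\card(\s(Q))\le 2$; derived base change along this deformation yields an injection $\s(R_n)\hookrightarrow\s(Q)$, hence $\card(\s(R_n))\le 2$.

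The inductive step uses two base-change statements for the assignment $C\mapsto R_i\otimes^{\mathbf{L}}_{R_{i-1}} C$. First, for each rightward-pointing $\vf\colon R_{i-1}\to R_i$ of finite complete intersection dimension, derived base change induces an injection $\s(R_{i-1})\hookrightarrow\s(R_i)$; this follows from the standard theory of semidualizing complexes under local ring homomorphisms of finite $\mathrm{G}$-dimension. Second, for each leftward-pointing complete intersection map $\psi\colon R_i\to R_{i-1}$ with residue-field isomorphism, derived base change gives a \emph{bijection} $\s(R_i)\to\s(R_{i-1})$. Taking inverses of these bijections to ``reverse'' each leftward arrow in the $\s$ picture and composing with the injections coming from the rightward arrows, I obtain
$$
\card(\s(R))=\card(\s(R_0))\le\card(\s(R_1))=\card(\s(R_2))\le\cdots\le\card(\s(R_n))\le 2.
$$

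The main obstacle is establishing the bijection for the leftward arrows. Injectivity is formal from general base-change theory for semidualizing complexes, but surjectivity---that every semidualizing complex over $R_{i-1}$ lifts to one over $R_i$---is the substantive content. Using a Cohen factorization $R_i\to R'\to R_{i-1}$ of $\psi$ into a flat local map with regular closed fiber followed by a surjection whose kernel is generated by a regular sequence, the residue-field isomorphism forces the closed fiber of the flat factor to be a power series ring over $k$, so that faithfully flat descent handles this factor. The deformation factor is the crux: one must show that a semidualizing complex lifts uniquely along a deformation with the same residue field, which can be carried out via Foxby equivalence or a direct lifting-of-complexes argument. Once both halves of the bijection are in place, the displayed chain delivers the bound $\card(\s(R))\le 2$.
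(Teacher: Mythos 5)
Your overall plan---propagate a cardinality bound $\card(\s(R_n))\leq 2$ back along the zigzag by combining injections (rightward arrows) with bijections (leftward arrows)---is natural, and your treatment of the base case and of the leftward complete-intersection arrows is in the right spirit (the paper handles the latter by first completing every $R_i$ so the leftward maps acquire finite flat dimension, then proving the bijection $\s(R_i)\to\s(R_{i-1})$ via a Cohen factorization; see Proposition~\ref{prop170206a} and Lemma~\ref{lem170213a}).

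The genuine gap is the claim that base change along a rightward arrow of \emph{finite complete intersection dimension} induces an injection $\s(R_{i-1})\hookrightarrow\s(R_i)$. You assert this ``follows from the standard theory'' for maps of finite $\mathrm{G}$-dimension, but it does not. The known injectivity result (\ref{para170508e}, citing Frankild--Sather-Wagstaff) is for maps of \emph{finite flat dimension}; finite CI-dimension is a strictly weaker hypothesis for ring homomorphisms, and no such injectivity statement is available in the literature. Indeed, even for deformations $R\onto R/(\x)$ --- where the question is whether semidualizing complexes lift uniquely --- injectivity is known precisely because one has finite flat dimension, not because of any $\mathrm{G}$-dimension argument. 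Your displayed chain of inequalities is therefore not supported, since without injectivity you only control $\card(\operatorname{image})$, not $\card(\s(R_{i-1}))$.

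The paper circumvents the missing injectivity with a classification argument rather than a counting argument: it fixes a semidualizing $R$-complex $C\not\simeq\shift^iR$, pushes forward to $C_1\in\s(R_1)$, lifts $C_1$ to $C_2\in\s(R_2)$ by surjectivity of $\s(R_2)\to\s(R_1)$, observes via the Poincar\'e series equality $P^{R_0}_C(t)=P^{R_1}_{C_1}(t)=P^{R_2}_{C_2}(t)$ (which holds for \emph{any} local homomorphism, by computing $\Tor$ over the source against the residue field of the target) that $C_2\not\simeq\shift^iR_2$, concludes by induction that $C_2$ is dualizing for $R_2$, descends to $C_1$ dualizing for $R_1$ (Gorenstein base change, \cite[(5.1) Theorem]{avramov:glh}), and finally invokes Lemma~\ref{lem171127a} --- the detection statement that $S\lotimes_RX$ dualizing for $S$ plus $\gdim(\vf)<\infty$ forces $X$ dualizing for $R$ --- to conclude $C$ is dualizing. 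Since a dualizing complex is unique up to shift, this gives $\card(\s(R))\leq 2$ without ever needing the base-change map $\s(R_0)\to\s(R_1)$ to be injective. Lemma~\ref{lem171127a} is the key missing ingredient in your write-up: it is exactly what replaces the unavailable injectivity for maps of finite CI-dimension, and the reason it suffices is that it only requires finite $\gdim(\vf)$ (which finite CI-dimension does imply), and because on the target side one already knows the classification.
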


\section{Local ring homomorphisms: general background}\label{sec171120a}

\begin{para}
Throughout this paper, $\catd(R)$ denotes the derived category of $R$, where the objects are the (possibly unbounded) $R$-complexes. An $R$-complex $X$ is called \emph{homologically bounded} if $\HH_i(X)=0$ for $|i|\gg 0$. 
An $R$-complex $X$ is \emph{homologically finite} if it is homologically bounded and each $\HH_i(X)$ is finitely generated.
The right and left derived functors of Hom and tensor product functors in $\catd(R)$ are denoted by $\rhom_R(-,-)$ and $-\lotimes_R-$, respectively.
For an integer $i$, the \emph{$i$-th shift} of an $R$-complex $X$ is denoted by $\shift^i X$. Note that $\left(\shift^i X\right)_j = X_{j-i}$ with $\partial_j^{\shift^i X}=(-1)^i\partial_{j-i}^X$ for all integers $j$. 
Quasiisomorphisms of $R$-complexes, i.e., isomorphisms in $\catd(R)$, are denoted by the symbol $\simeq$.
\end{para}

\begin{para}
We say that $R$ is a \emph{deformation of $S$} if there is a surjective ring homomorphism $\varphi\colon S\to R$ with $\ker(\varphi)$ generated by an $S$-regular sequence. In this case, we may also say that $\varphi$ is a deformation. The minimal number of generators of $\ker(\varphi)$ is called the \emph{codimension of $\varphi$}.
\end{para}

\begin{para}\label{defn171118a}
We say that $R$ is a \emph{complete intersection} if there is a deformation $\psi\colon S\to \comp R$, where $S$ is a regular local ring.
If $\ker(\psi)$ is principal, then $R$ is called a \emph{hypersurface}.
(In particular, we take the perspective that a regular local ring is a hypersurface.)
\end{para}


\begin{para}[\cite{AFH}]\label{defn20230805t}
Let $\vf\colon R\to S$ be a local ring homomorphism. We denote by $\grave\vf\colon R\to \comp S$ the composition of $\vf$ with the natural map $S\hookrightarrow \comp S$.

A \emph{Cohen factorization} of $\vf$ is a diagram $R\xra{\dot{\varphi}} R'\xra{\varphi'} S$ of local ring homomorphisms
such that $R'$ is complete, $\varphi=\varphi'\dot{\varphi}$, the map $\dot{\varphi}$ is 
flat with regular closed fibre, and $\varphi'$ is surjective. 
If $S$ is complete, then it follows from~\cite[(1.1) Theorem and (1.5) Proposition]{AFH} that $\varphi$ has a Cohen factorization.
\end{para}

\begin{para}[\cite{avramov:lcih}]\label{defn20230805e}
Let $\vf\colon R\to (S,\m_S)$ be a local ring homomorphism and $R\xra{\dot\vf} R'\xra{\vf'} \comp S$ be a Cohen factorization of $\grave\vf$. We say that
$\vf$ is \emph{complete intersection at $\m_S$} (or simply complete intersection) if
$\vf'$ is a deformation. Note that this definition is independent of the choice of Cohen factorization; see~\cite[(3.3) Remark]{avramov:lcih}. Also $R$ and $\vf$ are complete intersection if and only if
$S$ is complete intersection and $\fd_R(S)<\infty$;
see~\cite[(5.9), (5.10), and (5.12)]{avramov:lcih}.
\end{para}

\begin{para}[\cite{auslander:smt}]
We say that a finitely generated $R$-module $L$ has \emph{Gorenstein dimension $0$}, and write $\gdim_R(L)=0$, if the following conditions are satisfied:
\begin{enumerate}[\rm(i)]
\item
the canonical map $L\to L^{**}$ is an isomorphism, where $(-)^*=\Hom_R(-,R)$;
\item
$\Ext^i_R(L,R)=0=\Ext^i_R(L^*,R)$ for all $i\geq 1$.
\end{enumerate}
Modules with Gorenstein dimension $0$ are also called \emph{totally reflexive}.

For a non-negative integer $n$, we say that a finitely generated $R$-module $M$ has \emph{Gorenstein dimension at most $n$}, and write $\gdim_R(M)\leq n$, if there exists an exact sequence $0\to L_n\to \cdots \to L_1\to L_0\to M\to 0$ of finitely generated $R$-module such that $\gdim_R(L_i)=0$ for all $0\leq i\leq n$. If such an exact sequence does not exist, then we say $M$ has infinite Gorenstein dimension, and write $\gdim_R(M)=\infty$.

If $R$ is Gorenstein, then for every finitely generated $R$-module $M$ we have $\gdim_R(M)<\infty$. Conversely, if $\gdim_R(k)<\infty$, then $R$ is Gorenstein; see~\cite{auslander:smt}.
\end{para}

\begin{para}[\cite{avramov:rhafgd}]\label{para20230806n}
Using the notation from~\ref{defn20230805e}, we set
$$\gdim(\vf):=
\gdim_{R'}(\comp S)-\edim(\dot\vf)
$$
where $\edim(\dot\vf)$ denotes the embedding dimension of the regular closed fibre of $\dot\vf$. Note that, by~\cite[3.2. Theorem]{iyengar:golh}, this definition is independent of the choice
of Cohen factorization. Moreover, it follows from the definition that if $\varphi$ is complete intersection, then $\gdim(\vf)<\infty$. 
\end{para}

\begin{para}
Let $X$ be a homologically finite $R$-complex. The \emph{Poincar\'e} and \emph{Bass series} of $X$, denoted $P_X^R(t)$ and $I^X_R(t)$, respectively,
are the formal power series
$$
P_X^R(t):=\sum_{i\geq 0}\rank_{k}(\Tor^R_i(X,k))t^i\qquad\text{and}\qquad
I^X_R(t):=\sum_{i\geq 0}\rank_{k}(\Ext_R^i(k,X))t^i.
$$
\end{para}

\begin{para}[\protect{\cite[(7.1) Theorem]{avramov:rhafgd}}]\label{para20230806a}
Let $\vf\colon R\to (S, \m_S)$ be a local ring homomorphism with $\gdim(\vf)<\infty$.
The \emph{Bass series of $\vf$}, denoted $I_{\vf}(t)$, is a formal Laurent series
with non-negative integer coefficients satisfying the formal relation
\begin{equation}\label{eq20230811a}
I^S_S(t)=I^R_R(t)I_{\vf}(t).
\end{equation}
We say that $\vf$ is \emph{quasi-Gorenstein at $\m_S$} (or simply quasi-Gorenstein) if $I_{\vf}(t)=t^a$ for some integer $a$. In this case, it follows from~\cite[(7.4) Theorem]{avramov:rhafgd} that $a=\depth(S)-\depth(R)$. 
By~\cite[(7.7.2)]{avramov:rhafgd}, the ring $S$ is Gorenstein if and only if $R$ is Gorenstein and $\vf$ is
quasi-Gorenstein. 

If $\vf$ is quasi-Gorenstein and $\fd_R(S)<\infty$, then $\varphi$ is called \emph{Gorenstein at $\m_S$} (or simply Gorenstein). By~\cite[(7.2) Theorem]{avramov:glh} we have that $R$ and $\vf$ are Gorenstein if and only if
$S$ is Gorenstein and $\fd_R(S)<\infty$.
\end{para}

\begin{para}[\cite{avramov:cid}]
A diagram $R\xra{\vf} R'\xla{\pi} S$
of local ring homomorphisms is called a 
\emph{quasi-deformation} if $\vf$ is
flat and $\pi$ is a deformation.
The \emph{complete intersection dimension of an $R$-module $M$}, denoted $\cidim_R(M)$, is defined to be
$$
\!\cidim_R(M)\!:=\inf\{\pd_S(M\otimes_{R}R')-\pd_S(R')\mid R\to R'\leftarrow S\ \text{is a quasi-deformation}\}.
$$
If $R$ is complete intersection, then for every finitely generated $R$-module $M$ we have $\cidim_R(M)<\infty$. Conversely, if $\cidim_R(k)<\infty$, then $R$ is complete intersection; see~\cite[(1.3) Theorem]{avramov:cid}. 
\end{para}

\begin{para}[\cite{sather:cidfc}]
Let $\vf\colon R\to S$ be a local ring homomorphism.
The \emph{complete intersection dimension of  $\vf$}, denoted $\cidim(\vf)$,
is defined to be
$$
\cidim(\vf):= 
\inf\left\{\cidim_{R'}(\comp{S})-\edim(\Dot{\vf})
\left| \text{\begin{tabular}{c}
$R\xra{\dot\vf}R'\xra{\vf'}\comp{S}$ is a Cohen \\
factorization of $\grave\vf$
\end{tabular}}\right.\!\!\!\right\}.
$$
It is unknown whether the finiteness of $\cidim(\vf)$ 
is independent of the choice
of Cohen factorization.
\end{para}

\begin{para}\label{defn171120f}
If $\vf\colon R\to S$ is a local ring homomorphism with $\cidim(\vf)<\infty$ and $S$ is a complete intersection, then $R$ is a complete intersection.
Indeed, use a Cohen factorization to reduce to the case where $\vf$ is surjective. In this case, we have $\cidim_R(S)<\infty$, and therefore,
$\cx_R(S)<\infty$; see~\cite[(5.6) Theorem]{avramov:cid}. (Here, $\cx_R(S)$ denotes the complexity of $S$ over $R$; see~\cite{avramov:ifr} for the definition.) If $S$ is a complete intersection, then by~\cite[Theorem 8.1.2]{avramov:ifr} we have $\cx_S(k)<\infty$. It then follows from~\cite[Theorem 9.1.1(1) and Remark 7.1.1]{avramov:holh} that
$\cx_R(k)<\infty$. Thus, again by~\cite[Theorem 8.1.2]{avramov:ifr} we conclude that $R$ is a complete intersection.
\end{para}

The next discussion uses the notion of (semi)dualizing complexes. For the definitions of these complexes and more we refer the reader to Section~\ref{sec170206a}.

\begin{para}[\cite{avramov:rhafgd}]\label{para20230808d}
Let $\vf\colon R\to S$ be a local ring homomorphism, and let $D^{\comp R}$ be a dualizing $\comp R$-complex.
A \emph{dualizing complex for $\vf$} is a semidualizing $S$-complex
$D^{\vf}$ with the property that $D^{\comp R}\lotimes_{\comp R}(\comp S\lotimes_S D^{\vf})$
is a dualizing $\comp S$-complex.
If we assume that $\gdim(\varphi)<\infty$, then
a dualizing complex $D^{\grave \vf}$ for 
$\grave\vf$ exists by~\cite[(6.7) Lemma]{avramov:rhafgd}.


\end{para}

\section{Local rings with quasi-decomposable maximal ideal}
\label{sec171118b}

This section is devoted to the definition of local rings with quasi-decomposable maximal ideal -- a notion that was formally introduced by Nasseh and Takahashi in~\cite{nasseh:lrqdmi} -- and to the proof of Theorem~\ref{thm20230825a} in which we introduce combinatorially constructed classes of such rings. The class of local rings with quasi-decomposable maximal ideal naturally includes that of local rings with decomposable maximal ideal. Therefore, we start this section with the following definition; see Remark~\ref{para20230518a}.

\begin{para}\label{defn20230805j}
Let $(S,\fm_S,k)$ and $(T,\fm_T,k)$ be commutative noetherian local rings. The \emph{fiber product} of $S$ and $T$ over their common residue field $k$ is defined to be
$$
S\times_k T:=\left\{(s,t)\in S\times T\mid \pi_S(s)=\pi_T(t)\right\}
$$
where $S\xra{\pi_S} k\xla{\pi_T}T$ are the natural surjections. Note that $S\times_k T$ is a local ring with maximal ideal $\fm_{S\times_k T}=\fm_S\oplus \fm_T$ and residue field $k$.

We say that the local ring $R$ is a (non-trivial) fiber product if there exist local rings $(S,\fm_S,k)$ and $(T,\fm_T,k)$ with $S\neq k\neq T$ such that $R\cong S\times_k T$.
\end{para}

\begin{para}\label{para20230518a}
It follows from~\cite[Lemma 3.1]{ogoma:edc} (or~\cite[Fact 3.1]{nasseh:lrqdmi}) that the class of
fiber product rings coincides with the class of local rings with decomposable maximal ideal. More precisely, if $\fm_R=I\oplus J$ is a non-trivial decomposition of $\fm_R$, then $R\cong R/I\times_kR/J$.
\end{para}

\begin{para}
One can check that for any field $k$ there is a ring isomorphism
$$
\frac{k[[x_1,\ldots,x_n]]}{(f_1,\ldots,f_u)}\times_k \frac{k[[y_1,\ldots,y_m]]}{(g_1,\ldots,g_v)}\cong\frac{k[[x_1,\ldots,x_n,y_1,\ldots,y_m]]}{\left(f_1,\ldots,f_u,g_1,\ldots,g_v,x_iy_j\mid \begin{matrix}1\leq i\leq n\\ 1\leq j\leq m\end{matrix}\right)}.
$$
\end{para}

\begin{para}\label{para20230812s}
The maximal ideal $\fm_R$ of the ring $R$ is called \emph{quasi-decomposable}
if there is an $R$-regular sequence $\x\in\m_R$ such that $\fm_R/(\x)$ is decomposable. In this case we say that $R$ has \emph{quasi-decomposable maximal ideal}.
By~\ref{para20230518a}, $R$ has quasi-decomposable maximal ideal if it is a deformation of a fiber product ring.
\end{para}

\begin{para}\label{para20230817a}
Identifying $\m_S$ and $\m_T$ with the ideals $\m_S\oplus 0$ and $0\oplus \m_T$ of $S\times_kT$ in~\ref{defn20230805j}, note that $\m_S\m_T=0$. Hence, fiber product rings (e.g., local rings with decomposable maximal ideal) are not integral domains, thus, not regular. However, the following result holds true; see~\ref{disc20230522a} for a more detailed discussion.
\end{para}
\begin{prop}\label{disc171118a}
If $R$ is a regular local ring of dimension $n\geq 2$, then $\fm_R$ is quasi-decomposable.
\end{prop}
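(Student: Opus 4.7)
The plan is to exhibit an explicit $R$-regular sequence $\x \in \fm_R$ such that $\fm_R/(\x)$ decomposes into two nonzero summands. Since $R$ is regular of dimension $n \geq 2$, I will start by choosing a regular system of parameters $x_1,\ldots,x_n$ generating $\fm_R$, then kill off all but two of them, and finally impose the relation $x_1 x_2 = 0$. The expectation is that the resulting quotient will have maximal ideal $(x_1)\oplus(x_2)$.

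First I will set $\x := x_3,\ldots,x_n,x_1x_2$ and check that this is an $R$-regular sequence. The initial segment $x_3,\ldots,x_n$ is $R$-regular as part of a regular system of parameters, and the quotient $S := R/(x_3,\ldots,x_n)$ is a two-dimensional regular local ring with regular system of parameters $x_1,x_2$. In particular $S$ is a domain, so the nonzero element $x_1x_2$ is a nonzerodivisor in $S$, and therefore $\x$ is $R$-regular.

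Next I will verify that the maximal ideal of $T := R/(\x) = S/(x_1x_2)$ decomposes as $(x_1)T \oplus (x_2)T$. Clearly $(x_1)T + (x_2)T = \fm_T$, so the substance is the identity $(x_1)\cap(x_2) \subseteq (x_1x_2)$ in $S$. Given $f = x_1 a = x_2 b$ in $(x_1)\cap(x_2)$, the relation $x_2 b \in (x_1)$ together with the fact that $x_1,x_2$ is $S$-regular (so $x_2$ is a nonzerodivisor modulo $(x_1)$) forces $b \in (x_1)$, hence $f \in (x_1 x_2)$. Finally, neither summand is zero: if $(x_1)T = 0$ then $x_1 \in (x_1 x_2) \subseteq \fm_S\cdot(x_1)$, contradicting Nakayama's Lemma, and symmetrically for $(x_2)T$. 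This shows $\fm_R/(\x)$ is decomposable, so $\fm_R$ is quasi-decomposable.

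There is no real obstacle in this argument; the whole proof reduces to the elementary fact that in a regular local ring of dimension two a regular system of parameters $x_1,x_2$ satisfies $(x_1)\cap(x_2) = (x_1x_2)$, which is immediate from the regular-sequence property. The hypothesis $n \geq 2$ is used essentially, since $n = 1$ would leave only one parameter and the construction could not produce two nonzero summands.
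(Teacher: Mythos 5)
Your proof is correct and follows essentially the same route as the paper: mod out by $x_3,\ldots,x_n$ to reduce to a two-dimensional regular local ring and then kill $x_1x_2$, obtaining $\fm/(\x)$ decomposed as $(x_1)\oplus(x_2)$. The only cosmetic difference is that the paper justifies $(x_1)\cap(x_2)=(x_1x_2)$ via unique factorization in the two-dimensional regular ring, whereas you derive it from the regular-sequence property directly; both are elementary and equivalent here.
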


\begin{proof}
First assume that $n=2$. Let $R'=R/(xy)$, where $x,y\in \m_R$ is a regular system of parameters.
We show that the maximal ideal $\m_{R'}=(x,y)R'$ is decomposable.
Since $R$ is a unique factorization domain, we have $xR\cap yR=xyR$.
Thus, $xR'\cap yR'=(0)$. This implies that $\m_{R'}=(x,y)R'=xR'\oplus yR'$, as desired.

Now we prove the general case where $n\geq 3$. Let $r_1,\ldots,r_n\in \m_R$ be a regular system of parameters and note that $\ol R=R/(r_3,\ldots,r_n)$ is a $2$-dimensional regular ring. Hence, by the previous case, $\ol R$ has quasi-decomposable maximal ideal.
Since $r_3,\ldots,r_n$ is $R$-regular, $R$ also
has quasi-decomposable maximal ideal.
\end{proof}

Several classes of local rings with quasi-decomposable maximal ideal (that are not fiber products) have been introduced in~\cite{nasseh:lrqdmi}. Such classes include the following.

\begin{ex}\label{ex20230826a}
The ring $R$ has quasi-decomposable in any of the following cases.
\begin{enumerate}[\rm(a)]
\item
$R$ is a singular Cohen-Macaulay local ring with infinite residue field and minimal multiplicity, e.g., $R$ is a 2-dimensional non-Gorenstein
normal local domain with a rational singularity.

\item
$R=k[[H]]$ is a local complete numerical semigroup ring over a field $k$, where $H=\langle pq+p+1, 2q+1, p+2\rangle$
is the numerical semigroup with $p,q>0$ and $\gcd(p + 2, 2q + 1) = 1$.

\item
$R=k[[H]]$ is a non-Gorenstein almost-Gorenstein numerical semigroup ring with $\edim(R)=3$ and $e(R)\leq 6$. (Here, $\edim(R)$ denoted the embedding dimension of $R$.)

\item
$R$ is any of the Cohen-Macaulay local rings in~\cite[Examples 7.1, 7.2, 7.4, 7.5]{crs}.
\end{enumerate}
\end{ex} 

In the rest of this section, we prove Theorem~\ref{thm20230825a} that introduces new classes of Cohen-Macaulay and non-Cohen-Macaulay local rings with quasi-decomposable maximal ideal from a combinatorial point of view. We assume that the reader is familiar with combinatorial aspects of commutative algebra. However, to avoid confusion, we specify some terminology.

\begin{para}\label{defn171101a}
Let $G$ be a finite simple graph (i.e, $G$ has no loops and no multiple edges) with vertex set $V=\{v_1,\ldots,v_n\}$ and edge set $E$.
Consider the polynomial ring $S=k[v_1,\ldots,v_n]$ over a field $k$.
The \emph{edge ideal} of $G$ in $S$, denoted $I(G)$, is the ideal generated by the edges of $G$, that is,
$$
I(G):=\left(\{v_iv_j\in S\mid v_iv_j\in E\}\right)S.
$$
Set $k[G]:=S/I(G)$ and $k[\![G]\!]:=\comp{k[G]}=k[\![v_1,\ldots,v_n]\!]/(I(G))$, where $\comp{k[G]}$ is the completion of $k[G]$ with respect to the graded maximal ideal of $k[G]$.

Let $W=\{w_1,\ldots,w_n\}$ denote a second list of vertices. By $\shift G$ we denote the graph obtained from $G$ by
adding a ``whisker'' at each vertex of $G$, that is, $\shift G$ has vertex set $V\cup W$ and edge set
$\{v_iw_i\mid i=1,\ldots,n\}\cup E$.
\end{para}

\begin{ex}
Here are a couple of examples of edge ideals and whiskered graphs.
$$\xymatrix{
K_2 & v_1\ar@{-}[r] & v_2
&&\shift K_2 & w_1\ar@{-}[r] & v_1\ar@{-}[r] & v_2\ar@{-}[r] & w_2
\\
K_3 & v_1\ar@{-}[r]\ar@{-}[d] & v_2\ar@{-}[ld]
&&\shift K_3 & w_1\ar@{-}[r] & v_1\ar@{-}[r] \ar@{-}[d]& v_2\ar@{-}[r]\ar@{-}[ld] & w_2
\\
&v_3&&&&
w_3\ar@{-}[r]&v_3
}$$
\begin{align*}
I(K_2)
&=(v_1v_2)
&
I(\shift K_2)
&=(v_1v_2,v_1w_1,v_2w_2)
\\
I(K_3)
&=(v_1v_2,v_1v_3,v_2v_3)
&
I(\shift K_3)
&=(v_1v_2,v_1v_3,v_2v_3,v_1w_1,v_2w_2,v_3w_3)
\end{align*}
\end{ex}


Theorem~\ref{thm20230825a}(\ref{prop171124a}) follows directly from the next discussion.

\begin{para}
Continue with the terminology of~\ref{defn171101a}.
The edge ideal $I(\shift G)$ of the ring $S':=k[v_1,\ldots,v_n,w_1,\ldots,w_n]$
is Cohen-Macaulay, i.e., the quotient ring $k[\shift G]=S'/I(\shift G)$ is Cohen-Macaulay by~\cite[Proposition~2.2]{V1}
and~\cite[Proposition~6.3.2]{V2}.
Specifically, the ring $k[\shift G]$ has dimension $n$, and also the sequence $v_1-w_1,\ldots,v_n-w_n$ is $k[\shift G]$-regular with
$k[\shift G]/(v_1-w_1,\ldots,v_n-w_n)$ isomorphic to the local artinian ring
$k[G]':=k[v_1,\ldots,v_n]/(I(G),v_1^2,\ldots,v_n^2)$.
(One way to view this is via polarization of the non-square-free ideal $(I(G),v_1^2,\ldots,v_n^2)$; for a discussion on polarization see, for instance,~\cite{Sarah}.)
It follows that $v_1-w_1,\ldots,v_n-w_n$ is also $k[\![\shift G]\!]$-regular with
$$k[\![\shift G]\!]/(v_1-w_1,\ldots,v_n-w_n)\cong k[G]'.$$

From the definition of $k[G]'$, it is straightforward to show that
the socle elements of $k[G]'$ are in bijection with the maximal cliques
in the complementary graph $G^c$.
For instance, for the path $P_3=(v_1-v_2-v_3)$, the complementary graph consists of the edge $v_1-v_3$ and the isolated vertex $v_2$.
This gives two maximal cliques in $P_3^c$ (that are the connected components of $P_3^c$)
corresponding to the socle elements $v_1v_3$ and $v_2$ in $k[P_3]'=k[v_1,v_2,v_3]/(v_1^2,v_2^2,v_3^3,v_1v_2,v_2v_3)$.
Notice in this example that the vertex $v_2$ is a star-vertex, that is, it is adjacent to every other vertex in $P_3$.
Notice further that this element shows that $k[P_3]'$ is a fiber product as follows:
\begin{align*}
k[P_3]'
&=\frac{k[\![v_1,v_2,v_3]\!]}{(v_1^2,v_2^2,v_3^3,v_1v_2,v_2v_3)}
\cong \frac{k[\![v_2]\!]}{(v_2^2)}\times_k\frac{k[\![v_1,v_3]\!]}{(v_1^2,v_3^3)}.
\end{align*}
It follows that $k[\![\shift P_3]\!]$ has quasi-decomposable maximal ideal.

In general,
this process (with star vertex $v_n$) yields the isomorphism 
$$k[G]'\cong k[\![v_n]\!]/(v_n^2)\times_k k[H]'$$
where $H$ is the subgraph of $G$ induced
by the vertices $v_1,\ldots,v_{n-1}$.
On the other hand, if one only mods out by $v_2-w_2,\ldots,v_n-w_n$, then one obtains a quotient isomorphic to
the 1-dimensional Cohen-Macaulay fiber product
$k[\![v_n]\!]\times_k(k[H]'[\![w_n]\!])$;
see~\ref{disc171124a} for a concrete example.
\end{para}



\begin{para}\label{disc171124a}
For a list of variables $\underline{X}=X_{1,1},X_{2,1},\ldots,X_{1,n},X_{2,n}$, consider the ideal 
$$I=(X_{1,1},X_{2,1})^2+\cdots+(X_{1,n},X_{2,n})^2$$
of the ring $k[\![\underline{X}]\!]$ over a field $k$. For the new variables $Y,Z$, the ring
$$
R=\frac{k[\![Z,\underline X,Y]\!]}{(I,Z\underline{X},ZY)}
$$
constructed in~\cite[Proof 4.1]{nasseh:ahplrdmi} is a $1$-dimensional Cohen-Macaulay fiber product ring that arises from an edge ideal construction.
Specifically, consider the following graph $G$ obtained by connecting $n$ paths of length 1 to a single vertex $v$.
$$\xymatrix{
v_{2,1}\ar@{-}[rrd]
&v_{1,2}\ar@{-}[r]\ar@{-}[rd]
&v_{2,2}\ar@{-}[d]
&\cdots
&v_{1,n}\ar@{-}[d]\ar@{-}[lld]
\\
v_{1,1}\ar@{-}[u]\ar@{-}[rr]
&&v&&v_{2,n}\ar@{-}[ll]
}$$
Note that $v$ is a star vertex for this graph.
Thus, the ring $k[\![\shift G]\!]$ has quasi-decomposable maximal ideal by Theorem~\ref{prop171124a}.
In fact, modding $k[\![\shift G]\!]$ out by the regular sequence of elements of the form $v_{i,j}-w_{i,j}$,
we are setting each $v_{i,j}^2=0$, and this
yields the 1-dimensional Cohen-Macaulay local ring
$$\frac{k[\![v_{1,1},v_{2,1},v_{1,2},v_{2,2},\ldots,v_{1,n},v_{2,n},v,w]\!]}
{\begin{pmatrix}v_{1,1}^2,v_{1,1}v_{2,1},v_{2,1}^2,v_{1,2}^2,v_{1,2}v_{2,2},v_{2,2}^2,\ldots,v_{1,n}^2,v_{1,n}v_{2,n},v_{2,n}^2,\\
vw,
vv_{1,1},vv_{2,1},vv_{1,2},vv_{2,2},\ldots,vv_{1,n},vv_{2,n}\end{pmatrix}}$$
which is isomorphic to the ring $R$.
\end{para}

We conclude this section with the proof of Theorem~\ref{thm20230825a}(\ref{prop171124b}).\vspace{4pt}


\noindent \emph{Proof of Theorem~\ref{thm20230825a}(\ref{prop171124b}).}
Continue with the terminology of~\ref{defn171101a}, and
let
$$k[G]'':=k[v_1,\ldots,v_n]/(I(G),v_1^2,\ldots,v_{n-1}^2).$$
Notice that $v_nv_i=0$ for all $i<n$ in $k[G]''$, but $v_n^2\neq 0$.
It then follows that
$$k[G]''\cong k[v_n]\times_k k[v_1,\ldots,v_{n-1}]/(I(H),v_1^2,\ldots,v_{n-1}^2).$$
Hence, by~\cite[Fact 2.2]{nasseh:ahplrdmi}, the ring $k[G]''$ has dimension $1$ and depth $0$.
Furthermore, polarizing shows that this ring deforms to the ring $k[\![\wti G]\!]$ with dimension $n$ and depth $n-1$, as desired.
\qed

\section{Gorenstein and complete intersection properties}\label{sec161010b}

As we mention in Propositions~\ref{cor161010b} and~\ref{cor161010by} below, the structure of Gorenstein local rings with decomposable maximal ideal (i.e., fiber product rings) can be described completely. Theorem~\ref{cor161010bz} generalizes this description to the local rings with quasi-decomposable maximal ideal. In this section, we prove this theorem and provide examples to show that local rings which are somewhat similar can be distinguished by the property of having quasi-decomposable maximal ideal or not. This persuades us to consider a relaxed version of the quasi-decomposable maximal ideal condition in Theorem~\ref{cor161010bz} and prove Theorem~\ref{prop171118a} as a more general version.

\begin{prop}[\protect{\cite[Corollary 2.7 and Fact 2.9]{nasseh:ahplrdmi}}]\label{cor161010b}
If $\fm_R$ is decomposable, then
$R$ is Gorenstein if and only if it is a $1$-dimensional hypersurface. In this case, if $R\cong S\times_k T$, then both $S$ and $T$ are $1$-dimensional regular local rings and $e(R)=2$.
\end{prop}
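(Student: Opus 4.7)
The backward implication is routine since hypersurfaces are complete intersections, hence Gorenstein. For the forward direction, I write $R\cong S\times_k T$ with $S\neq k\neq T$ and assume $R$ is Gorenstein. My plan is to pin down the dimension of $R$, then the dimensions of $S$ and $T$, and finally to prove that $S$ and $T$ are regular.

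First, Lescot's bound $\depth(S\times_k T)\leq 1$ (already invoked in the introduction) combined with the Cohen--Macaulayness of $R$ gives $\dim R\leq 1$, and $\dim R=0$ is ruled out because if $R$ were Artinian then the relation $\m_S\m_T=0$ inside $R$ would force $\soc(R)\cong\soc(S)\oplus\soc(T)$, which has $k$-dimension $\geq 2$, contradicting that an Artinian Gorenstein ring has $1$-dimensional socle. So $\dim R=1$. Next, $\dim R=\max(\dim S,\dim T)=1$, and if one factor (say $S$) were $0$-dimensional with $S\neq k$, then the nilpotency of $\m_S$ together with $\m_S\m_T=0$ would make every element of $\m_R$ a zero-divisor, forcing $\depth R=0$ and a contradiction. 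Hence $\dim S=\dim T=1$, and both are Cohen--Macaulay by the same reasoning.

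For the final step I pick parameters $s\in\m_S$, $t\in\m_T$, so that $(s,t)\in R$ is $R$-regular, and analyze $\soc(\bar R)$ for $\bar R:=R/(s,t)R$, which is $1$-dimensional since $R$ is Gorenstein of type $1$. A direct check shows that $(a,b)\in\m_R=\m_S\oplus\m_T$ represents a socle element of $\bar R$ iff $a\m_S\subseteq s\m_S$ and $b\m_T\subseteq t\m_T$, and computing $(s,t)R=s\m_S\oplus t\m_T+k\cdot(s,t)$ as a $k$-subspace of $R$ yields the short exact sequence
\[
0\longrightarrow k\longrightarrow A_S/s\m_S\oplus A_T/t\m_T\longrightarrow\soc(\bar R)\longrightarrow 0,
\]
where $A_S:=\{a\in\m_S\mid a\m_S\subseteq s\m_S\}$, $A_T$ is defined symmetrically, and the kernel records the identification $\overline{(s,0)}=-\overline{(0,t)}$ in $\bar R$. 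Since each summand contains the nonzero class of its parameter and $\dim_k\soc(\bar R)=1$, both summands must have dimension exactly $1$, so $A_S=sS$ and $A_T=tT$.

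The remaining main obstacle is the implication that $A_S=sS$ forces $S$ to be a DVR. Since $A_S=\ann_S(\m_S/s\m_S)$, the condition $A_S=sS$ says $\m_S/s\m_S$ is faithful over the Artinian ring $S/sS$; coupled with the $1$-dimensional stabilization $\m_S^{N+1}=s\m_S^N$ for large $N$, any minimal generator of $\m_S$ outside $sS$ would contribute an extra socle class to $S/s\m_S$, forcing $\mu(\m_S)=1$. Hence $S$ is a DVR, and by symmetry so is $T$, giving $R\cong k[[u,v]]/(uv)$, which is manifestly a $1$-dimensional hypersurface whose associated graded ring has Hilbert series $1+2t+2t^2+\cdots$, so that $e(R)=2$.
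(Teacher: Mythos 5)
The paper itself does not prove this proposition --- it is imported verbatim from \cite[Corollary 2.7 and Fact 2.9]{nasseh:ahplrdmi} --- so I can only assess your argument on its own terms. Most of your reduction is sound: the dimension count via Lescot's bound, the exclusion of the artinian and mixed-dimension cases, the verification that $(s,t)$ is $R$-regular, and the socle computation giving the exact sequence $0\to k\to A_S/s\m_S\oplus A_T/t\m_T\to\soc(\bar R)\to 0$ (your phrase ``$k\cdot(s,t)$ as a $k$-subspace'' tacitly assumes a coefficient field, but the sequence itself can be established directly over $R$, so this is harmless). The genuine gap is in the step you yourself flag as the main obstacle: deducing from $A_S=sS$ that $S$ is a DVR. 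Your justification rests on two assertions that fail for a general parameter $s$. First, the stabilization $\m_S^{N+1}=s\m_S^N$ for $N\gg0$ holds only when $(s)$ is a reduction of $\m_S$, which need not be the case for the $s$ you chose (and may fail for every principal ideal when $k$ is finite). Second, a minimal generator of $\m_S$ outside $sS$ need not ``contribute a socle class'': take $S=k[[t^2,t^5]]$ and $s=t^5$; then $t^2\notin sS$ but $t^2\m_S=(t^4,t^7)\not\subseteq s\m_S$, so $t^2\notin A_S$, and your mechanism produces no second socle element even though $S$ is not regular.

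The implication you need is nevertheless true, and here is a correct route. If $S$ is a one-dimensional Cohen--Macaulay local ring that is not a DVR, then $(S:\m_S)\supsetneq S$ (because $\Ext^1_S(k,S)\neq0$) and $\m_S(S:\m_S)=\m_S$, so $(S:\m_S)=(\m_S:\m_S)$. Pick $q\in(\m_S:\m_S)\ssm S$; then $sq\in\m_S$, $sq\,\m_S\subseteq s\m_S$, and $sq\notin sS$ since $s$ is a unit in the total ring of fractions, so $A_S\supsetneq sS$ and $\dim_k\soc(\m_S/s\m_S)\geq2$. Contrapositively, $A_S=sS$ forces $S$ (and likewise $T$) to be regular of dimension one. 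Finally, your concluding identification $R\cong k[[u,v]]/(uv)$ is only valid for complete equicharacteristic $R$; to conclude ``hypersurface'' in the sense of~\ref{defn171118a} one should pass to $\comp R\cong\comp S\times_k\comp T$ and invoke Cohen's structure theorem as in Proposition~\ref{cor161010by}, while $e(R)=2$ follows from $\len_R(R/\m_R^n)=\len_S(S/\m_S^n)+\len_T(T/\m_T^n)-1=2n-1$.
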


As an immediate consequence of this proposition we have the following result.

\begin{cor}\label{cor20220511a}
Let $R$ be a $1$-dimensional Gorenstein ring. Then, $\fm_R$ is quasi-decomposable if and only if it is decomposable.
\end{cor}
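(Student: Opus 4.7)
The plan is to prove this by an almost immediate application of Proposition~\ref{cor161010b}, exploiting the numerical constraint that $\dim(R)=1$ forces on any regular sequence witnessing quasi-decomposability.

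First I would handle the backward direction, which is essentially a tautology: if $\fm_R$ is decomposable, take $\x$ to be the empty $R$-regular sequence in the definition of quasi-decomposable given in~\ref{para20230812s}, so that $\fm_R/(\x)=\fm_R$ is decomposable, and hence $\fm_R$ is quasi-decomposable.

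For the forward direction, suppose $\fm_R$ is quasi-decomposable, so by~\ref{para20230812s} there exists an $R$-regular sequence $\x\in\fm_R$ such that $\fm_R/(\x)$ is decomposable. Since $R$ is $1$-dimensional Cohen-Macaulay (being Gorenstein), any $R$-regular sequence has length at most $1$, so either $\x$ is empty or $\x$ has length one. Because $R$ is Gorenstein, the quotient $R/(\x)$ is again Gorenstein with decomposable maximal ideal. Apply Proposition~\ref{cor161010b} to $R/(\x)$: it must be a $1$-dimensional hypersurface, and in particular $\dim(R/(\x))=1$. Since $\dim(R/(\x))=\dim(R)-|\x|=1-|\x|$, this forces $|\x|=0$, so $\x$ is the empty sequence and $\fm_R=\fm_R/(\x)$ is decomposable.

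The argument is essentially a one-line dimension count once Proposition~\ref{cor161010b} is in hand; there is no real obstacle, since the key input (that Gorenstein plus decomposable maximal ideal implies $\dim=1$) does all the work. The only thing worth stating explicitly is the standard fact that $R/(\x)$ remains Gorenstein when $\x$ is an $R$-regular sequence in a Gorenstein ring.
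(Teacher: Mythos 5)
Your proof is correct and matches the intended argument: the paper labels this an ``immediate consequence'' of Proposition~\ref{cor161010b} without spelling it out, and your dimension count is exactly the natural way to realize that. The only cosmetic remark is that the observation ``$\x$ has length at most $1$'' is not actually needed, since the equality $\dim(R/(\x)) = 1 - |\x| = 1$ already forces $|\x|=0$ directly; but including it does no harm.
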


In Proposition~\ref{cor161010b}, if we assume that the local ring $R$ is complete, then we obtain more details on its structure.  

\begin{prop}\label{cor161010by}
Assume that $\fm_R$ is decomposable, and let $R\cong S\times_k T$. If $R$ is Gorenstein and complete,
then there exists a $2$-dimensional complete regular local ring $Q$ with regular system of parameters $r,s$ such that
$$S\cong Q/rQ,\qquad T\cong Q/sQ,\qquad \text{and}\qquad
R\cong Q/rsQ.$$
\end{prop}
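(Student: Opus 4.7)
The plan is to first invoke Proposition~\ref{cor161010b}, which collapses the hypotheses to a very rigid situation: $R$ is a one-dimensional hypersurface with $e(R)=2$, and both $S$ and $T$ are one-dimensional regular local rings (hence DVRs) with residue field $k$. Applying~\ref{defn171118a} and then completing the regular local ring produced there (completion preserves surjectivity and the principality of the kernel), I obtain a \emph{complete} two-dimensional regular local ring $Q$ and a deformation $\pi\colon Q\onto R$ with $\ker\pi=(f)$ for some $Q$-regular element $f$. The multiplicity formula for a hypersurface gives $\operatorname{ord}_{\m_Q}(f)=e(R)=2$, so $f\in\m_Q^2\setminus\m_Q^3$.

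Next I would extract $r$ and $s$ from the fiber-product structure. Since $S$ and $T$ are domains, the two ideals $\m_S$ and $\m_T$ (viewed inside $R$) are the distinct minimal primes of $R$; they satisfy $\m_S\cdot\m_T=0$ and $\m_S+\m_T=\m_R$, and $R/\m_S\cong T$, $R/\m_T\cong S$. Pulling them back along $\pi$ and using that $Q$ is a unique factorization domain (being a two-dimensional regular local ring), each pullback is generated by a single prime element of $\m_Q$, giving principal height-one primes $(r),(s)\subseteq Q$ with $Q/(r)\cong T$ and $Q/(s)\cong S$ after relabeling. The regularity of $S$ and $T$ forces $r,s\in\m_Q\setminus\m_Q^2$, while $\m_S\cap\m_T=0$ together with $S\neq k\neq T$ prevents $r$ and $s$ from being associates in $Q$.

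The final step is largely bookkeeping. Since $r,s$ are non-associate primes in the UFD $Q$, we have $(r)\cap(s)=(rs)$; therefore $f\in(r)\cap(s)$, so $f=rsu$ for some $u\in Q$. Additivity of $\m_Q$-adic orders in a regular local ring yields $2=\operatorname{ord}_{\m_Q}(f)=\operatorname{ord}(r)+\operatorname{ord}(s)+\operatorname{ord}(u)=2+\operatorname{ord}(u)$, forcing $u$ to be a unit. Hence $(f)=(rs)$ and $R\cong Q/rsQ$. Finally, the equality $\m_R=\m_S+\m_T$ translates under $\pi$ to $\m_Q=(r,s)+(f)=(r,s)$, and since $\m_Q$ has minimal number of generators equal to $2$, the pair $r,s$ must form a regular system of parameters. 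The main obstacle I anticipate is not logical but notational: one must carefully track which principal height-one prime of $Q$ corresponds to $\m_S$ versus $\m_T$, since the fiber-product projections $R\onto S$ and $R\onto T$ have kernels $\m_T$ and $\m_S$, respectively.
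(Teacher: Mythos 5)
Your argument is correct and fills in the details where the paper disposes of the whole proposition in a single sentence, citing Cohen's structure theorem and \cite[Corollary~3.2.5]{takahashi:dssmrcf}. Your route is the natural direct one: Proposition~\ref{cor161010b} puts you in the situation of a complete one-dimensional hypersurface of multiplicity $2$ with $S,T$ discrete valuation rings, so Definition~\ref{defn171118a} (after completing) gives a presentation $R\cong Q/(f)$ with $Q$ a complete two-dimensional regular local ring and $\operatorname{ord}_{\m_Q}(f)=e(R)=2$; the minimal primes $\m_S,\m_T$ of $R$ pull back to height-one (hence principal) primes $(r),(s)$ in the UFD $Q$ with $r,s\in\m_Q\setminus\m_Q^2$; then order-additivity forces $(f)=(rs)$, and $\m_Q=\pi^{-1}(\m_R)=(r)+(s)$ shows $r,s$ is a regular system of parameters, with the labelling adjusted so that $Q/rQ\cong S$ and $Q/sQ\cong T$. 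Everything here checks out. One small remark: you could reach $(f)=(rs)$ without invoking the multiplicity computation at all, by observing that $R=S\times_kT$ is reduced (a subring of the reduced ring $S\times T$), so $(f)=\ker\pi$ is a radical ideal of $Q$ whose minimal primes are exactly $(r)$ and $(s)$, giving $(f)=(r)\cap(s)=(rs)$ directly; the multiplicity formula then becomes a consequence rather than an input.
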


\begin{proof}
The existence of the ring $Q$ with the desired properties comes from
Cohen's structure theorem, as in~\cite[Corollary~3.2.5]{takahashi:dssmrcf}.
\end{proof}

We now prove Theorem~\ref{cor161010bz} which is a generalization of Proposition~\ref{cor161010b}.\vspace{4pt}

\noindent \emph{Proof of Theorem~\ref{cor161010bz}.}
If $R$ is a hypersurface, then it is Gorenstein.

For the converse, assume that $R$ is Gorenstein. It follows that there is an $R$-regular sequence $\x=x_1,\ldots,x_c\in\m_R$ such that the maximal ideal $\m_{\ol R}$ of the ring $\ol R=R/(\x)$ is decomposable. Since $\ol R$ is also Gorenstein, Proposition~\ref{cor161010b} implies that $\ol R$ is a
$1$-dimensional hypersurface. Write $\ol R\cong S\times_k T$, where $S$ and $T$ are $1$-dimensional regular local rings.
Since $\m_{\ol R}=\m_S\oplus\m_T$,
it follows readily that $\edim(\ol R)=2$.
By construction, we have $\dim(R)=\dim(\ol R)+c=1+c$ and $\edim(R)\leq\edim(\ol R)+c=2+c$.
Hence,
$\edim(R)-\dim(R)\leq
(2+c)-(1+c)=
1$,
so $R$ is a hypersurface.
For the inequality involving $e(R)$, note that 
$e(R)\leq e(\ol R)=2$ by Proposition~\ref{cor161010b}.
\qed

\begin{para}\label{disc20230522a}
In contrast to Proposition~\ref{disc171118a}, if $R$ is a
singular $n$-dimensional hypersurface, then $\fm_R$ may or may not be quasi-decomposable.
For instance, Theorem~\ref{cor161010bz} rules out artinian hypersurfaces and the hypersurfaces of
multiplicity greater than 2.
However, even the hypersurfaces of dimension $1$ and multiplicity $2$ need not have quasi-decomposable maximal ideal.
Indeed, by Corollary~\ref{cor20220511a}, if $R$ is a 1-dimensional hypersurface that has quasi-decomposable maximal ideal, then it is not an integral domain. Hence, for any field $k$, the ring
$k[\![x,y]\!]/(x^2-y^3)\cong k[\![t^2,t^3]\!]$ does not have quasi-decomposable maximal ideal.

On the other hand, in higher dimensions, some integral domain hypersurfaces of multiplicity $2$ do have quasi-decomposable maximal ideal while others do not; see Examples~\ref{ex20220511a} and~\ref{ex20220511b} below.
\end{para}

\begin{ex}\label{ex20220511a}
Let $R=\bbc[\![x,y,z]\!]/(x^2+y^2+z^2)$. This ring is a $2$-dimensional
hypersurface domain that has quasi-decomposable maximal ideal. In fact,
the element $z$ is $R$-regular and we have
\begin{align*}
\frac{R}{zR}
&\cong\frac{\bbc[\![x,y]\!]}{(x^2+y^2)}
=\frac{\bbc[\![x,y]\!]}{(x+iy)(x-iy)}
\cong\frac{\bbc[\![u,v]\!]}{(uv)}
\cong\bbc[\![u]\!]\times_\bbc\bbc[\![v]\!].
\end{align*}
\end{ex}

\begin{ex}\label{ex20220511b}
If $g$ is an element of the cube of the maximal ideal $(x,y,z)$ of the ring $k[\![x,y,z]\!]$ over the field $k$,
then the hypersurface
$R=k[\![x,y,z]\!]/(x^2+g)$ does not have quasi-decomposable maximal ideal.
(Note that $e(R)=2$, and there are plenty of examples where this ring is an integral domain.)
By way of contradiction, suppose that $f\in k[\![x,y,z]\!]$ such that $\overline{f}\in R$ is $R$-regular and that $\ol R:=R/fR$ is isomorphic to a fiber product ring of the form $S\times_k T$.

Note that it follows readily that the maximal ideal of $\ol R/\m_{\ol R}^3$ is decomposable. Indeed, if $\m_{\ol R}=\m_S\oplus\m_T$, then
$\m_{\ol R}^3=\m_S^3\oplus\m_T^3$, so
the maximal ideal of $\ol R/\m_{\ol R}^3$ is
$$\m_{\ol R}/\m_{\ol R}^3=(\m_S\oplus\m_T)/(\m_S^3\oplus\m_T^3)\cong (\m_S/\m_S^3)\oplus(\m_T/\m_T^3).
$$
Note that the condition $g\in (x,y,z)^3$ implies that
\begin{equation}\label{eq171120a}
\frac{\ol R}{\m_{\ol R}^3}\cong \frac{k[\![x,y,z]\!]}{(x^2+g,f)+(x,y,z)^3}=\frac{k[\![x,y,z]\!]}{(x^2,f)+(x,y,z)^3}.
\end{equation}
If $f\in(x,y,z)^2$, then $e(\ol R)\geq 4$, contradicting Proposition~\ref{cor161010b}.
Thus, we have $f\in(x,y,z)\ssm(x,y,z)^2$. Now we consider two cases.

Case 1: $f\equiv ax^2\pmod{(x,y,z)^3}$ for some $a\in k$.
In this case, \eqref{eq171120a} reads as
$$\frac{\ol R}{\m_{\ol R}^3}\cong \frac{k[\![x,y,z]\!]}{(x^2)+(x,y,z)^3}.
$$
If the maximal ideal of this ring is decomposable, then in particular there are two linearly independent linear forms
$\alpha=bx+cy+dz$ and $\alpha'=b'x+c'y+d'z$ such that $\alpha\alpha'=0$ in $\ol R/\m_{\ol R}^3$,
that is, $\alpha\alpha'\in (x^2)+(x,y,z)^3\subset k[\![x,y,z]\!]$.
It is straightforward to show that there are no such forms, a contradiction.

Case 2: $f\not\equiv ax^2\pmod{(x,y,z)^3}$ for all $a\in k$.
In this case, since $f$ is in $(x,y,z)\ssm(x,y,z)^2$, the elements $x,f$ form part of a regular system of parameters for the ring $k[\![x,y,z]\!]$.
Let $x,f,u$ be a regular system of parameters for $k[\![x,y,z]\!]$.
Then, \eqref{eq171120a} reads as
$$\frac{\ol R}{\m_{\ol R}^3}\cong\frac{k[\![x,f,u]\!]}{(x^2,f)+(x,f,u)^3}\cong\frac{k[\![x,u]\!]}{(x^2)+(x,u)^3}.
$$
As in Case 1, it is straightforward to use linear forms to show that the maximal idea of this ring is indecomposable, again, a contradiction.

Thus, $R$ does not have quasi-decomposable maximal ideal.
\end{ex}

Examples~\ref{ex20220511a} and~\ref{ex20220511b} are interesting in that they show that rings which are somewhat similar can be distinguished by the property
of having quasi-decomposable maximal ideal (or not).
These rings have many similar homological properties, both being hypersurfaces. This fact can be seen by observing that
each one is a deformation of the regular local ring $k[\![x,y,z]\!]$, which \emph{does} have quasi-decomposable maximal ideal.
With this in mind, it is natural to consider a relaxed version of the quasi-decomposable maximal ideal condition.
This is explored in Theorem~\ref{prop171118a} whose proof is given next and concludes this section.\vspace{4pt}

\noindent \emph{Proof of Theorem~\ref{prop171118a}.}
\eqref{prop171118a1}$\implies$\eqref{prop171118a2}
Assume that $R$ is a complete intersection.
Let $\x\in \m_R$ be a maximal $R$-regular sequence, and set $R':=R/(\x)$. Note that $R'$ is artinian and hence, it is complete. By Cohen's Structure Theorem, $R'$ is a homomorphic image of a regular local ring $R''$ that can be chosen such that $\dim R''\geq 2$.
Since $R$ is a complete intersection, the same is true of $R'$, so the map $R''\to R'$ is a deformation.
Furthermore, $R''$ has quasi-decomposable maximal ideal by Proposition~\ref{disc171118a}.

\eqref{prop171118a2}$\implies$\eqref{prop171118a4}
Assume that $R$ is Gorenstein and admits a diagram of deformations
$R\to R'\from R''$ such that $R''$ has quasi-decomposable maximal ideal.
The deformation $R\to R'$ is quasi-Gorenstein by~\ref{para20230806a} and has finite complete intersection dimension.
Since $R''$ has quasi-decomposable maximal ideal, there is a deformation $R''\to R_2$ such that
$R_2$ has decomposable maximal ideal.
Now,
take the given diagram $R\to R'\from R''$ and set $\ti R=R'$ and $R_1=R''$ with $n=1$
to conclude that condition~\eqref{prop171118a4} holds.

\eqref{prop171118a4}$\implies$\eqref{prop171118a5}
follows from the facts that every deformation is a complete intersection local ring homomorphism and
every ring with decomposable maximal ideal has quasi-decomposable maximal ideal.

\eqref{prop171118a5}$\implies$\eqref{prop171118a1}
Under the assumptions, note that by~\ref{para20230806a} the ring $\ti R$ is Gorenstein since $R$ is Gorenstein and $R\to\ti R$ is quasi-Gorenstein.
The sequence~\eqref{eq20230519a} of complete intersection local ring homomorphisms shows that each ring $R_i$ is Gorenstein, by~\ref{para20230806n} and~\ref{para20230806a}.
It follows from Theorem~\ref{cor161010bz} that $R_n$ is a hypersurface,
so by~\ref{defn20230805e} each $R_i$ is a complete intersection. In particular, $\ti R$ is a complete intersection and hence, $R$ is a complete intersection by~\ref{defn171120f}.

\eqref{prop171118a1}$\implies$\eqref{prop171118a3}
If $R$ is a complete intersection, then there is a deformation $Q\to \comp R$, where
$Q$ is a regular local ring that can be chosen to have dimension $\geq 2$. Note that
$Q$ has quasi-decomposable maximal ideal by Proposition~\ref{disc171118a}.

\eqref{prop171118a3}$\implies$\eqref{prop171118a4}
Assume that $R$ is Gorenstein and the completion
$\comp R$ admits a deformation $\comp R\from Q$ such that $Q$ has quasi-decomposable maximal ideal.
Argue as in the proof of \eqref{prop171118a2}$\implies$\eqref{prop171118a4}, using the fact that the natural map $R\to \comp R$ is quasi-Gorenstein and has finite complete intersection dimension,
to conclude that condition~\eqref{prop171118a4} holds.
\qed

\section{Gorenstein property and the vanishing of Ext}\label{sec20220511a}
Our goal in this section is to prove Theorem~\ref{prop171123c} in which we study Tachikawa's Conjecture along certain diagrams of local ring homomorphisms that involve local rings with quasi-decomposable maximal ideal. We start with the next result which is essentially a consequence of~\cite[Corollary~6.3]{nasseh:lrqdmi}.

\begin{prop}\label{cor170106b}
If $\fm_R$ is decomposable, then the following are equivalent.
\begin{enumerate}[\rm(i)]
\item\label{cor170106b2}
$R$ is Gorenstein.
\item\label{cor170106b2'}
$R$ is a hypersurface.
\item\label{cor170106b1}
There exists
a non-zero finitely generated $R$-module $N$ with $\id_R(N)<\infty$ such that $\Ext^{i}_R(N,R)=0=\Ext^{i+1}(N,R)$ for
some $i\geq 5$.
\item\label{cor170106b3}
There exists a finitely generated $R$-module $M$ with $\pd_R(M)=\infty$ such that
$\Ext^{i}_R(M,R)=0=\Ext^{i+1}(M,R)$ for
some $i\geq 5$.
\end{enumerate}
\end{prop}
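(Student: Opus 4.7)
The plan is to handle the equivalence in three pieces. The biconditional \eqref{cor170106b2}$\iff$\eqref{cor170106b2'} is just Proposition~\ref{cor161010b}, so I focus on the remaining implications. The two forward implications from \eqref{cor170106b2} to the Ext-vanishing conditions admit explicit witnesses. The reverse implications \eqref{cor170106b1}$\implies$\eqref{cor170106b2} and \eqref{cor170106b3}$\implies$\eqref{cor170106b2} are where the real content lies; I will reduce both to~\cite[Corollary~6.3]{nasseh:lrqdmi}, the Ext-rigidity dichotomy for rings with decomposable maximal ideal.

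For the forward implications: in \eqref{cor170106b1} take simply $N:=R$, using $\id_R(R)<\infty$ from Gorensteinness and $\Ext^i_R(R,R)=0$ for $i\geq 1$. For \eqref{cor170106b3}, write $R\cong S\times_k T$ via Proposition~\ref{cor161010b} and take $M:=S$ as an $R$-module through the projection $R\onto S$, so $M\cong R/(0\oplus\m_T)$. Then $\depth_R(M)=\depth_S(S)=1=\dim R$ makes $M$ maximal Cohen-Macaulay, and the nonzero annihilator $\ann_R(M)=0\oplus\m_T$ makes $M$ non-free; hence $\pd_R(M)=\infty$ over the singular hypersurface $R$, while MCM over the Gorenstein ring $R$ makes $M$ totally reflexive, giving $\Ext^i_R(M,R)=0$ for every $i\geq 1$, and so certainly for any $i\geq 5$.

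For the reverse implications I would apply~\cite[Corollary~6.3]{nasseh:lrqdmi} to the pair $(M,R)$ and $(N,R)$, respectively. That result delivers the dichotomy $\pd_R(X)<\infty$ or $\id_R(Y)<\infty$ whenever $\Ext^i_R(X,Y)=\Ext^{i+1}_R(X,Y)=0$ in a sufficiently high degree --- precisely the regime $i\geq 5$ of the hypothesis. In \eqref{cor170106b3}$\implies$\eqref{cor170106b2}, the hypothesis $\pd_R(M)=\infty$ rules out the first alternative, forcing $\id_R(R)<\infty$, i.e., $R$ is Gorenstein. In \eqref{cor170106b1}$\implies$\eqref{cor170106b2}, the dichotomy either yields $\id_R(R)<\infty$ directly, or yields $\pd_R(N)<\infty$; in the latter case, the simultaneous finiteness of $\pd_R(N)$ and $\id_R(N)$ on a nonzero finitely generated module forces $R$ to be Gorenstein by a classical criterion of Foxby. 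The one point to verify with care --- the main obstacle, though it is purely a matter of indexing --- is that the $i\geq 5$ threshold in the statement lines up precisely with the vanishing threshold enforced by~\cite[Corollary~6.3]{nasseh:lrqdmi}; once this alignment is checked, the rest of the argument is immediate.
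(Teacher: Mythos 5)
Your proof is correct and follows essentially the same structure as the paper's: both routes reduce the hard implications to \cite[Corollary~6.3]{nasseh:lrqdmi}, invoke Foxby's criterion for the case $\pd_R(N)<\infty$ in \eqref{cor170106b1}$\implies$\eqref{cor170106b2}, and dispatch \eqref{cor170106b2}$\iff$\eqref{cor170106b2'} via Proposition~\ref{cor161010b}. The one place you genuinely diverge is the witness for \eqref{cor170106b2}$\implies$\eqref{cor170106b3}. The paper simply takes $M=k$: since a fiber product ring is never regular (see~\ref{para20230817a}), $\pd_R(k)=\infty$, and since $R$ is a $1$-dimensional Gorenstein ring by Proposition~\ref{cor161010b}, $\Ext^i_R(k,R)=0$ for all $i\geq 2$, which covers $i\geq 5$. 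Your choice $M=S$ (pulled back along $R\twoheadrightarrow S$) also works, by the argument you give: it is maximal Cohen-Macaulay over $R$, hence totally reflexive over the Gorenstein ring $R$, and non-free since its annihilator $0\oplus\m_T$ is nonzero, so $\pd_R(M)=\infty$ by Auslander--Buchsbaum. That reasoning is sound, but it deploys more machinery (MCM, total reflexivity, the explicit fiber product structure) than the residue field requires; the paper's choice is the cleaner minimal witness. On your flagged indexing concern, the $i\geq 5$ window is precisely what \cite[Corollary~6.3]{nasseh:lrqdmi} enforces, and the paper treats the application as immediate; your worry can be discharged without further work.
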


\begin{proof}
\eqref{cor170106b1}$\implies$\eqref{cor170106b2}
Under the assumptions of \eqref{cor170106b1}, it follows from~\cite[Corollary~6.3]{nasseh:lrqdmi} that $\id_R(R)<\infty$ or $\pd_R(N)<\infty$.
In the first case, $R$ is Gorenstein by definition.
In the latter case,
$R$ is  Gorenstein by a result of Foxby~\cite{foxby}.

\eqref{cor170106b2}$\implies$\eqref{cor170106b1}
If $R$ is Gorenstein, then the $R$-module $N=R$ satisfies   condition~\eqref{cor170106b1}.

\eqref{cor170106b3}$\implies$\eqref{cor170106b2}
follows again from~\cite[Corollary~6.3]{nasseh:lrqdmi}.

\eqref{cor170106b2}$\implies$\eqref{cor170106b3}
Since $R$ is a fiber product ring, it is not regular; see~\ref{para20230817a}.
Therefore, $M=k$ satisfies condition~\eqref{cor170106b3} since $R$ is Gorenstein.
%
%

Finally, \eqref{cor170106b2} $\Longleftrightarrow$ \eqref{cor170106b2'} follows from Proposition~\ref{cor161010b}.
\end{proof}

Proposition~\ref{cor170106b} can be souped up to the following result by a similar argument using~\cite[Corollary~6.8]{nasseh:lrqdmi} and Theorem~\ref{cor161010bz}.

\begin{prop}\label{prop171123b}
If $R$ is singular and $\fm_R$ is quasi-decomposable, then the following conditions are equivalent.
\begin{enumerate}[\rm(i)]
\item\label{prop171123b3}
$R$ is Gorenstein.
\item\label{prop171123b3'}
$R$ is a hypersurface.
\item\label{prop171123b1}
There exists
a non-zero finitely generated $R$-module $N$ with $\id_R(N)<\infty$ such that $\Ext^{i}_R(N,R)=0$ for  $i\gg 0$.
\item\label{prop171123b2}
There exists a finitely generated $R$-module $M$ with $\pd_R(M)=\infty$ such that
$\Ext^{i}_R(M,R)=0$ for  $i\gg 0$.
\end{enumerate}
\end{prop}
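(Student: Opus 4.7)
The plan is to mirror the proof of Proposition~\ref{cor170106b}, replacing the decomposable-maximal-ideal ingredients with their quasi-decomposable counterparts. The equivalence \eqref{prop171123b3}$\Leftrightarrow$\eqref{prop171123b3'} is immediate from Theorem~\ref{cor161010bz}, so the real content is to cycle \eqref{prop171123b3}$\Rightarrow$\eqref{prop171123b1}, \eqref{prop171123b3}$\Rightarrow$\eqref{prop171123b2}, \eqref{prop171123b1}$\Rightarrow$\eqref{prop171123b3}, and \eqref{prop171123b2}$\Rightarrow$\eqref{prop171123b3}.

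For the two forward implications from \eqref{prop171123b3}, I would produce explicit witnesses. For \eqref{prop171123b1}, the module $N=R$ works: if $R$ is Gorenstein then $\id_R(R)<\infty$ and $\ext^i_R(R,R)=0$ for all $i\geq 1$. For \eqref{prop171123b2}, take $M=k$. Since $R$ is singular, $\pd_R(k)=\infty$; and since $R$ is Gorenstein of some dimension $d$, we have $\ext^i_R(k,R)=0$ for all $i\neq d$, in particular for $i\gg 0$.

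For the two reverse implications, the key tool is the Ext-friendliness of rings with quasi-decomposable maximal ideal, established in \cite[Corollary~6.8]{nasseh:lrqdmi}: vanishing $\ext^i_R(X,Y)=0$ for $i\gg 0$ forces $\pd_R(X)<\infty$ or $\id_R(Y)<\infty$. Applying this to $(N,R)$ under hypothesis \eqref{prop171123b1}, either $\id_R(R)<\infty$, in which case $R$ is Gorenstein by definition, or $\pd_R(N)<\infty$; in the latter case $N$ is a nonzero finitely generated module of finite projective \emph{and} finite injective dimension, so Foxby's theorem~\cite{foxby} forces $R$ to be Gorenstein. Applying Ext-friendliness to $(M,R)$ under hypothesis \eqref{prop171123b2}, either $\pd_R(M)<\infty$ (excluded by assumption) or $\id_R(R)<\infty$, again giving that $R$ is Gorenstein.

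I do not anticipate a significant obstacle: the singularity hypothesis is only used to guarantee that $k$ serves as a witness in \eqref{prop171123b3}$\Rightarrow$\eqref{prop171123b2}, and quasi-decomposability enters only through the Ext-friendliness result of~\cite{nasseh:lrqdmi}. The whole argument is a short bookkeeping exercise once Theorem~\ref{cor161010bz}, \cite[Corollary~6.8]{nasseh:lrqdmi}, and Foxby's theorem are in hand.
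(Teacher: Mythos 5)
Your proposal is correct and matches the paper's intended argument: the paper does not write out a proof of Proposition~\ref{prop171123b}, stating only that it follows ``by a similar argument'' to Proposition~\ref{cor170106b} using~\cite[Corollary~6.8]{nasseh:lrqdmi} and Theorem~\ref{cor161010bz}, and your write-up supplies exactly that argument — Ext-friendliness for the two reverse implications (with Foxby's theorem in the case $\pd_R(N)<\infty$), the witnesses $N=R$ and $M=k$ for the forward ones, and Theorem~\ref{cor161010bz} for the Gorenstein/hypersurface equivalence.
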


We now give the proof of Theorem~\ref{prop171123c}.\vspace{4pt}

\noindent \emph{Proof of Theorem~\ref{prop171123c}.}
Assume that \eqref{prop171123c1} holds.
The fact that $\psi$ is a deformation implies that
\begin{equation}\label{eq171123a}
R'\simeq\shift^e\Rhom[S]{R'}{S}
\end{equation}
where $e$ is the codimension of $\psi$.
Also, if $S$ were regular, then $R'$ would be Gorenstein, implying that $R$ is Gorenstein as well.
Therefore, we assume without loss of generality that $S$ is singular.

Write $\vf$ as a composition $\vf=\vf_1\cdots\vf_n$ of flat maps and deformations.
Rewrite each deformation as a composition of codimension-1 deformations, if necessary, to assume without loss of generality that
each deformation has codimension~1.
\vspace{4pt}

\noindent \textbf{Claim}: $\gdim_R(N)<\infty$.
\vspace{4pt}

To prove the claim, we argue by induction on $n$. For the base case $n=0$, note that $\vf$ is the identity on $R=R'$.
By assumption, we have
$\Ext^{i}_{R}(N,R)=0$ for  $i\gg 0$, that is, the $R$-complex $\Rhom{N}{R}$ is homologically bounded.
It follows from~\eqref{eq171123a} that
\begin{align*}
\Rhom[R]{N}{R}
&\simeq\Rhom[R]{N}{\shift^e\Rhom[S]{R}{S}}\\
&\simeq\shift^e\Rhom[R]{N}{\Rhom[S]{R}{S}}\\
&\simeq\shift^e\Rhom[S]{R\lotimes_RN}{S}\\
&\simeq\shift^e\Rhom[S]{N}{S}.
\end{align*}
Hence, the $S$-complex $\Rhom[S]{N}{S}$ is homologically bounded.
In other words, we have $\Ext^{i}_{S}(N,S)=0$ for  $i\gg 0$.
From~\cite[Corollary~6.8]{nasseh:lrqdmi}, we have that $\pd_S(N)<\infty$ or $S$ is Gorenstein.
In either of these cases, we have $\gdim_S(N)<\infty$. It now follows from~\cite[(2.2.8) Theorem]{lars} that
$\gdim_{R}(N)<\infty$ as well.
\vspace{4pt}

For $n\geq 1$ we consider the following cases.
\vspace{4pt}

Case 1: $\vf_n\colon R\to R''$ is flat.
By flat base change, the $R''$-module $N'':=R''\otimes_RN$ satisfies
$\Ext^{i}_{R''}(N'',R'')=0$ for  $i\gg 0$.
If $R''$ is regular, then so is $R$, which is a contradiction.
Thus, the diagram $R''\to R'\from S$ satisfies the hypotheses of our induction step,
so we conclude that $\gdim_R(N)=\gdim_{R''}(N'')<\infty$; see, for instance, \cite[(4.1.4)]{avramov:rhafgd}.
\vspace{4pt}
%

Case 2: $\vf_n\colon R\to R''$ is a codimension-1 deformation. In this case $\vf_n$ is surjective with kernel generated by
an $R$-regular element $x$.
Let $N_1$ be a syzygy of $N$, which implies that $x$ is $N_1$-regular.
Dimension-shifting implies that $\Ext^i_R(N_1,R)=0$ for all $i\gg 0$.
It follows that we have $N_1'':=R''\otimes_RN_1\simeq R''\lotimes_RN_1$ and therefore,
\begin{align*}
\Rhom[R'']{N_1''}{R''}
&\simeq\Rhom[R'']{R''\lotimes_RN_1}{R''}\\
&\simeq\Rhom{N_1}{\Rhom[R'']{R''}{R''}}\\
&\simeq\Rhom{N_1}{R''}.
\end{align*}
Thus, we have $\Ext^i_{R''}(N_1'',R'')\cong\Ext^i_R(N_1,R'')$.
Because of the short exact sequence $0\to R\xra{x} R\to R''\to 0$, using the assumption $\Ext^i_R(N_1,R)=0$ for $i\gg 0$,
we conclude that
$\Ext^i_{R''}(N_1'',R'')\cong\Ext^i_R(N_1,R'')=0$ for $i\gg 0$.
It follows from the induction step that
$\gdim_R(N_1)=\gdim_{R''}(R''\lotimes_RN_1)=\gdim_{R''}(N_1'')<\infty$; see~\cite[(4.31) Corollary]{auslander:smt}.
Since $N_1$ is a syzygy of $N$, it follows that $\gdim_R(N)<\infty$.

This establishes the claim.
\vspace{4pt}

Since we now have $\gdim_R(N)<\infty$ and $\id_R(N)<\infty$, it follows from~\cite[Theorem 3.2]{henrik} that $R$ is Gorenstein.

To show that \eqref{prop171123c2} implies that $R$ is Gorenstein,
argue as above
to conclude that either $\pd_R(M)<\infty$ or $R$ is Gorenstein.
Since $M$ has infinite projective dimension by assumption, we conclude that $R$ is Gorentein.
\qed\vspace{4pt}

Next example shows that part~\eqref{prop171123c2}
of Theorem~\ref{prop171123c} cannot be weakened to
having a diagram $R\xra\vf R'\xla\psi S$ described in part~\eqref{prop171123c1}.

\begin{ex}\label{ex171123aa}
Consider the Cohen-Macaulay local rings $S=k[\![x,y,z]\!]/(x^2,xy,y^2)$ and $R=k[\![x,y,z]\!]/(x^2,xy,y^2,z^2)$.
Note that $S$ has quasi-decomposable maximal ideal because the $S$-regular sequence $z$ satisfies
$$S/(z)\cong k[\![x,y]\!]/(x^2,xy,y^2)\cong k[\![x]\!]/(x^2)\times_k k[\![y]\!]/(y^2).$$
Also, $R$ is not Gorenstein and the natural projection $S\to R$ is a codimension-1 deformation. On the other hand, the $R$-module $R/(z)$ has infinite projective dimension and is totally reflexive
so it has lots of Ext-vanishing with respect to $R$; see~\ref{para20230808r} below for the definition of totally reflexive.
\end{ex}


The following result is a slight variation on the implication ``\eqref{prop171123c2}$\implies$$R$ is Gorenstein''
of Theorem~\ref{prop171123c}.

\begin{prop}\label{prop171123d}
Assume that there exists a finitely generated $R$-module $M$ with $\cidim_R(M)=\infty$ such that
$\Ext^{i}_R(M,R)=0$ for  $i\gg 0$,
and there is a diagram of local ring homomorphisms $R\xra\vf R'\xla\psi S$ such that
$\vf$ is flat,
$\psi$ is a deformation,
and $S$ has quasi-decomposable maximal ideal. Then,
$R$ is Gorenstein.
\end{prop}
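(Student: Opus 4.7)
The plan is to mirror the base case ($n=0$) of the argument given for Theorem~\ref{prop171123c}\eqref{prop171123c2}, while exploiting the key observation that the given diagram $R\xra\vf R'\xla\psi S$ is itself a quasi-deformation. This observation is what allows the hypothesis $\cidim_R(M)=\infty$ to be used directly, without inducting along a longer chain of flat maps and deformations.

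First I would base-change $M$ along the flat local map $\vf$: setting $M':=R'\otimes_RM$, flatness gives $\Ext^i_{R'}(M',R')\cong R'\otimes_R\Ext^i_R(M,R)=0$ for $i\gg0$. Next, using that $\psi$ is a deformation of codimension $e$ and hence $R'\simeq\shift^e\Rhom[S]{R'}{S}$ as in~\eqref{eq171123a}, standard Hom--tensor adjunction yields
$$\Rhom[R']{M'}{R'}\simeq\shift^e\Rhom[S]{M'}{S},$$
so $\Ext^i_S(M',S)=0$ for $i\gg0$. Since $S$ has quasi-decomposable maximal ideal, \cite[Corollary~6.8]{nasseh:lrqdmi} then forces the dichotomy that either $\pd_S(M')<\infty$ or $S$ is Gorenstein.

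The first branch should be ruled out using the hypothesis $\cidim_R(M)=\infty$: a Koszul resolution of $R'$ over $S$ gives $\pd_S(R')<\infty$, so the quasi-deformation $R\xra\vf R'\xla\psi S$ directly witnesses
$$\cidim_R(M)\leq\pd_S(M\otimes_RR')-\pd_S(R')<\infty,$$
a contradiction. In the remaining case, $S$ is Gorenstein, hence so is $R'\cong S/\ker(\psi)$, being a quotient of $S$ by an $S$-regular sequence; faithfully flat descent of the Gorenstein property along the flat local map $\vf\colon R\to R'$ then forces $R$ to be Gorenstein.

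I do not expect any serious obstacle here: the argument is essentially identical to the base case of Theorem~\ref{prop171123c}\eqref{prop171123c2}, with the slightly stronger hypothesis $\cidim_R(M)=\infty$ replacing $\pd_R(M)=\infty$. The only point worth highlighting is that the ambient diagram itself serves as the quasi-deformation against which $\cidim_R(M)$ is computed, which is precisely what lets the $\pd_S(M')<\infty$ branch of the dichotomy be discarded immediately rather than being chased back to $R$ through additional intermediate maps.
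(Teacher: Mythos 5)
Your argument is correct and follows the same skeleton as the paper's: both amount to running the base case of the proof of Theorem~\ref{prop171123c}, namely base-change $M$ along the flat map $\vf$, use the deformation $\psi$ and Hom-tensor adjunction to transfer the Ext-vanishing to $S$, invoke \cite[Corollary~6.8]{nasseh:lrqdmi} to obtain the dichotomy ``$\pd_S(R'\otimes_RM)<\infty$ or $S$ is Gorenstein,'' and then descend the Gorenstein property along $\psi$ and $\vf$. The one point where you genuinely diverge is how the hypothesis $\cidim_R(M)=\infty$ is consumed: the paper first cites \cite[(1.13)~Proposition]{avramov:cid} to deduce $\cidim_{R'}(R'\otimes_RM)=\infty$ and then excludes the finite-projective-dimension branch using the quasi-deformation $R'\xra{=}R'\xla{\psi}S$, whereas you observe that the given diagram $R\xra{\vf}R'\xla{\psi}S$ is itself a quasi-deformation, so $\pd_S(R'\otimes_RM)<\infty$ would immediately give $\cidim_R(M)\leq\pd_S(R'\otimes_RM)-\pd_S(R')<\infty$ (with $\pd_S(R')<\infty$ from the Koszul resolution), contradicting the hypothesis. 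Your variant is slightly more economical in that it needs no flat base-change result for CI-dimension; the paper's phrasing has the advantage of reducing verbatim to the already-written argument of Theorem~\ref{prop171123c}. All the individual steps you use (flat base change of Ext, the isomorphism $R'\simeq\shift^e\Rhom[S]{R'}{S}$ and adjunction, Gorenstein descent along a deformation and along a flat local map) are sound.
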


\begin{proof}
If $\cidim_R(M)=\infty$, then it follows from~\cite[(1.13) Proposition]{avramov:cid} that $\cidim_{R'}(R'\otimes_RM)=\infty$ as well.
Now, argue as in the proof of Theorem~\ref{prop171123c}.
\end{proof}



We have seen in Theorems~\ref{prop171118a} and~\ref{prop171123c} that rings which admit certain diagrams
of local ring homomorphisms with the ring appearing on the right having quasi-decomposable maximal ideal have restrictive homological properties somehow similar to the rings with quasi-decomposable maximal ideals.
We conclude this section with a result about G-regularity that is of a similar spirit.

\begin{para}\label{para20230808r}
Following~\cite{takahashi:grlr}, the ring $R$ is called \emph{G-regular} if the class of totally reflexive $R$-modules (i.e., finitely generated $R$-modules of Gorenstein dimension $0$) coincides with the class of free $R$-modules.

\end{para}

%


\begin{prop}\label{prop171123a}
Let $\vf\colon R\to S$ be a local ring homomorphism
that is a composition of flat local ring homomorphisms and complete intersection local ring homomorphisms.
Assume that $S$ is Cohen-Macaulay and
has quasi-decomposable maximal ideal.
If $R$ is not a complete intersection, then $S$ and $R$ are both G-regular.
\end{prop}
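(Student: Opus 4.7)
The plan is to argue in two stages: first show that $S$ is G-regular, then descend G-regularity along $\vf$ to conclude the same for $R$.

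For the first stage, let $M$ be a totally reflexive $S$-module, so $\Ext^i_S(M,S)=0$ for every $i\geq 1$. Since $S$ has quasi-decomposable maximal ideal, it is Ext-friendly by~\cite[Corollaries~6.5 and 6.8]{nasseh:lrqdmi}; hence $\pd_S(M)<\infty$ or $\id_S(S)<\infty$. In the first case, $M$ is totally reflexive of finite projective dimension and so $M$ is free. In the second case, $S$ is Gorenstein, and Theorem~\ref{cor161010bz} then forces $S$ to be a hypersurface, in particular a complete intersection. But the complete intersection property descends along each link of $\vf$: along flat local maps by the classical theorem of Avramov-Foxby relating $R$, $S$, and the closed fibre, and along CI local homomorphisms by~\ref{defn171120f} (such maps have finite $\cidim$). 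So $R$ would be a complete intersection, contradicting the hypothesis. Hence $M$ is free and $S$ is G-regular. The identical argument applied to the extended composition $R\to S\to\comp S$ then shows that $\comp S$ is G-regular too, since $\comp S$ inherits quasi-decomposability of the maximal ideal and the completion map $S\to\comp S$ is flat.

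For the second stage, I plan to assume for contradiction that $N$ is a non-free totally reflexive $R$-module, and then transfer $N$ along $\vf$ to produce a non-free totally reflexive $\comp S$-module, contradicting the G-regularity of $\comp S$ established above. Writing $\vf=\vf_n\cdots\vf_1$, at each flat local step $\vf_i$, base change along $\vf_i$ preserves total reflexivity (by base change for Gorenstein dimension) and non-freeness (by faithful flatness). At a CI step $\vf_i\colon R_{i-1}\to R_i$, I take a Cohen factorization $R_{i-1}\xra{\dot\vf_i}R_i'\xra{\vf_i'}\comp{R_i}$ of $\grave{\vf_i}$, with $\dot\vf_i$ flat and $\vf_i'\colon R_i'\to R_i'/(\xx)$ a deformation. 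The flat part transfers as in the previous case; for the deformation, I use that every totally reflexive module has the same associated primes as the base ring, so the $R_i'$-regular sequence $\xx$ is automatically regular on the transferred totally reflexive module, and reduction modulo $\xx$ then preserves total reflexivity (standard) and non-freeness (by Nakayama applied to a hypothetical free cover). Iterating through the full composition and then applying the final completion map $S\to\comp S$ delivers the required non-free totally reflexive module over $\comp S$.

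The main obstacle will be that Cohen factorizations at interior CI steps land in completions $\comp{R_i}$ rather than in $R_i$, so the subsequent maps $\vf_{i+1},\ldots,\vf_n$ must be rerouted through $\comp{R_i}$. I plan to handle this via the universal property of completion, which extends each $\vf_j$ to a compatible local homomorphism $\comp{R_{j-1}}\to\comp{R_j}$; such extensions preserve the flat (respectively, CI) property of the original $\vf_j$, so the iterative transfer in the second stage proceeds consistently all the way to $\comp S$.
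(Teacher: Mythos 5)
Your argument is correct in substance and reaches the conclusion by essentially the same route as the paper, but packaged contrapositively, so a brief comparison is worthwhile. For $S$: the paper notes $\fd_R(S)<\infty$, deduces from~\ref{defn20230805e} (equivalently~\ref{defn171120f}) that $S$ is not a complete intersection, hence not Gorenstein by Theorem~\ref{cor161010bz}, and then simply cites \cite[Corollary~6.6]{nasseh:lrqdmi} for G-regularity of a Cohen--Macaulay non-Gorenstein ring with quasi-decomposable maximal ideal; your Ext-friendliness argument re-proves that corollary, which is fine. For the transfer to $R$: the paper runs the descent forward-to-backward, quoting from the proof of \emph{loc.\ cit.}\ that a deformation (or flat local map) $A\to B$ with $B$ Cohen--Macaulay and G-regular forces $A$ to be Cohen--Macaulay and G-regular; propagating the Cohen--Macaulay property backwards is what guarantees, via the Auslander--Bridger formula, that totally reflexive $A$-modules are maximal Cohen--Macaulay and hence that the defining regular sequence of the deformation is regular on them. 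You instead push a hypothetical non-free totally reflexive $R$-module forward to $\comp S$, which is the contrapositive of the same chain of module manipulations; your explicit treatment of the completions arising from Cohen factorizations (and your verification that $\comp S$ inherits the hypotheses) is more careful than the paper's terse "it follows," and your associated-primes device avoids needing the intermediate rings to be Cohen--Macaulay. One inaccuracy to repair: it is false that a totally reflexive module has \emph{the same} associated primes as the ring (over $A=k[\![x,y]\!]/(xy)$ the totally reflexive module $A/(x)$ has $\ass$ equal to $\{(x)\}$ only); what is true, and all you need, is the containment $\ass_A(M)\subseteq\ass(A)$, which holds because a totally reflexive module $M\cong M^{**}$ embeds into a free module, so an $A$-regular sequence avoids every associated prime of $M$ and is therefore $M$-regular. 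With that one-line fix, and noting that the paper's proof additionally yields that $R$ is Cohen--Macaulay (not required by the statement), your proposal goes through.
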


\begin{proof}
Note that our assumptions imply that $\fd_R(S)<\infty$.
Since $R$ is not a complete intersection, finite flat dimension descent implies that $S$ is also not a complete intersection; see~\ref{defn171120f}.
Theorem~\ref{cor161010bz} implies that $S$ is not Gorenstein.
Our assumption that $S$ is Cohen-Macaulay and
has quasi-decomposable maximal ideal implies that $S$ is G-regular by~\cite[Corollary~6.6]{nasseh:lrqdmi}.
The proof of \emph{loc.\ cit}.\ shows that if $A\to B$ is a deformation such that $B$ is Cohen-Macaulay  and G-regular, then $A$ is Cohen-Macaulay  and G-regular.
It is straightforward to show that the same implication holds when the map $A\to B$ is flat and local.
Thus, it follows that $R$ is Cohen-Macaulay  and G-regular.
\end{proof}

\begin{para}
In contrast with Theorem~\ref{prop171118a}, one cannot improve Proposition~\ref{prop171123a} to allow for a zig-zag of local ring homomorphisms.
In fact, Example~\ref{ex171123aa} shows that if $S$ is Cohen-Macaulay and
has quasi-decomposable maximal ideal,
$R$ is not a complete intersection, and
$R\xra = R\xla\tau S$ is a diagram of local ring homomorphisms,
where $\tau$ is a codimension-1 deformation, then one cannot conclude that $R$ is G-regular.
\end{para}

%
%
%

\section{Semidualizing complexes} \label{sec170206a}

The notion of Semidualizing modules was originally introduced by Foxby~\cite{foxby:gmarm} and rediscovered by several authors independently for different applications; see, for instance~\cite{avramov:rhafgd, golod:gdagpi, sather:bnsc, vasconcelos:dtmc, wakamatsu:mtse}. Special cases of such modules include
canonical modules over Cohen-Macaulay rings, a notion that was introduced by Grothendieck; for more details see~\cite{hartshorne:lc}.

Our goal in this section is to prove Theorem~\ref{cor170213a} in which we show that the cardinality of the set consisting of shift-isomorphism classes of semidualizing $R$-complexes is small under the existence of a certain diagram of local ring homomorphisms. This set, which is denoted by $\s(R)$ (see~\ref{para20230810a} below), is known to be a finite set by~\cite{nasseh:lrfsdc}. On the other hand, for every integer $n\geq 1$, by~\cite[Theorem B]{nasseh:ahplrdmi}, there exists a local ring $R$ with $\card(\s(R))=2^n$. Hence, in general, $\s(R)$ can be big.

Note that for a single ring $R$, if $R$ is a fiber product ring or more generally, if $\fm_R$ is quasi-decomposable, then by~\cite[Corollary~4.6]{nasseh:vetfp} and Proposition~\ref{prop171127a} below
we have that $\s(R)$ is small. More precisely, in these cases we have $\card(\s(R))\leq 2$.
Theorem~\ref{cor170213a}, in fact, shows how far we can push this result along a zig-zag diagram of local ring homomorphisms.

\begin{para}
A finitely generated $R$-module $C$ is \emph{semidualizing} if
one has $R\cong \Hom_R(C,C)$ and $\Ext_R^i(C,C)=0$ for all $i\geq 1$. For instance, the free $R$-module $R^1$ is semidualizing.
A \emph{dualizing module} $D$ for $R$ is a semidualizing module with $\id_R(D)<\infty$.\footnote{The notions of dualizing module and canonical module agree when $R$ is Cohen-Macaulay.}

Note that $R$ admits a dualizing module if and only if it is Cohen-Macaulay and a homomorphic image of a local Gorenstein ring.
\end{para}

More generally, we define the following notions.

\begin{para}
A homologically finite $R$-complex $C$ is \emph{semidualizing} if
the natural homothety morphism $\chi^R_C\colon R\to\Rhom CC$ is an isomorphism in $\catd(R)$.
A \emph{dualizing complex} is a
semidualizing complex of finite injective dimension, i.e., a semidualizing complex that is isomorphic in $\catd(R)$
to a bounded complex of injective $R$-modules.
\end{para}

\begin{para}\label{para20230810a}
The set of isomorphism (resp. shift-isomorphism) classes of semidualizing $R$-modules (resp. $R$-complexes) in $\catd(R)$ is denoted $\s_0(R)$ (resp. $\s(R)$).
Note that $\s_0(R)$ is naturally a subset of $\s(R)$ because every semidualizing $R$-module is a semdualizing $R$-complex concentrated in degree 0.
\end{para}

\begin{para}
Note that $R$ is Gorenstein if and only if the free $R$-module $R^1$ of rank $1$ is dualizing for $R$, and this is if and only if $R^1$ is the only semidualizing $R$-complex up to shift-isomorphism in $\catd(R)$.
By~\cite{hartshorne:rad} and~\cite{Kaw} we know that $R$ has a dualizing complex if and only if it is a homomorphic
image of a local Gorenstein ring.
\end{para}

\begin{para}\label{para170508e}
The map on $\s$ induced by base-change along a local ring homomorphism of finite flat dimension is 1-1; see~\cite[Theorems 4.5 and 4.9]{frankild:rrhffd}.
\end{para}

\begin{prop}\label{prop171127a}
If $\m_R$ is quasi-decomposable, then $\card(\s(R))\leq 2$. More precisely, $\s(R)$ consists of the free $R$-module $R^1$ and dualizing $R$-complex (if it exists).
\end{prop}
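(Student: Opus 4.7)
The plan is to reduce to the fiber-product case via deformation. By the quasi-decomposability hypothesis and Remark~\ref{para20230518a}, there is an $R$-regular sequence $\x = x_1,\ldots,x_c$ in $\m_R$ such that $\ol R := R/(\x)$ is a fiber product ring. The canonical surjection $R\onto \ol R$ has finite flat dimension (indeed $\pd_R(\ol R)=c$ via the Koszul resolution), so by~\ref{para170508e} the base-change functor $-\lotimes_R \ol R$ induces an injection $\s(R)\hookrightarrow \s(\ol R)$. The cited result~\cite[Corollary~4.6]{nasseh:vetfp} applied to the fiber product ring $\ol R$ says $\s(\ol R)\subseteq\{R^1, D_{\ol R}\}$, where $D_{\ol R}$ denotes the dualizing $\ol R$-complex when it exists; combining these two facts yields $\card(\s(R))\leq 2$ at once.

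For the more precise claim, let $C\in \s(R)$ be a class different from that of $R^1$. By injectivity of base change we cannot have $C\lotimes_R \ol R\simeq \ol R$ up to shift, and hence $C\lotimes_R \ol R\simeq D_{\ol R}$ up to shift; in particular $\id_{\ol R}(C\lotimes_R \ol R)<\infty$. Since $\x$ is $R$-regular, Koszul self-duality furnishes an isomorphism $\rhom_R(\ol R, C)\simeq \shift^{-c}(C\lotimes_R \ol R)$ in $\catd(\ol R)$, so $\rhom_R(\ol R, C)$ also has finite injective dimension over $\ol R$. A standard change-of-rings formula for a quotient by a regular sequence then gives $\id_R(C)<\infty$, so $C$ is a dualizing $R$-complex. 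In particular, the existence of a non-free class in $\s(R)$ forces the existence of a dualizing $R$-complex, and that class is represented by it up to shift.

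The main subtlety I foresee is justifying the ascent of finite injective dimension from $\rhom_R(\ol R, C)$ over $\ol R$ to $C$ over $R$. This is routine given the Koszul self-duality identification of $\rhom_R(\ol R, -)$ with $\shift^{-c}(-\lotimes_R \ol R)$ together with the finite flat dimension of $\ol R$ over $R$; all other ingredients reduce to direct invocations of~\ref{para170508e} and~\cite[Corollary~4.6]{nasseh:vetfp}.
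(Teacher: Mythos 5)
Your proof tracks the paper's argument exactly through the main reductions: choose an $R$-regular sequence $\x$ with $\ol R := R/(\x)$ a fiber product, use the injectivity of the base-change map $\s(R)\to\s(\ol R)$ from~\ref{para170508e}, and apply~\cite[Corollary~4.6]{nasseh:vetfp} to $\ol R$. You diverge only in the final step, identifying the non-free class as dualizing. The paper's proof gets this immediately from~\cite[(5.1)~Theorem]{avramov:glh}, the descent of ``dualizing'' along Gorenstein local homomorphisms (a deformation being Gorenstein). You instead give a hands-on argument via Koszul self-duality, $\rhom_R(\ol R, C)\simeq\shift^{-c}(\ol R\lotimes_R C)$ in $\catd(\ol R)$, followed by an ascent of finite injective dimension. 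Both routes work; yours is in effect unfolding part of the proof of the cited Avramov--Foxby theorem in this special case.

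The one thing you call ``routine'' deserves more care, though it is true. The ascent you need---that $\id_{\ol R}\bigl(\rhom_R(\ol R,C)\bigr)<\infty$ forces $\id_R(C)<\infty$---does not follow merely from Koszul self-duality plus finite flat dimension of $\ol R$ over $R$. The clean justification is the Hom-tensor adjunction
\[
\rhom_{\ol R}\bigl(k,\rhom_R(\ol R,C)\bigr)\;\simeq\;\rhom_R(k,C),
\]
which gives an equality of Bass numbers, combined with the Bass-number characterization of finite injective dimension for homologically finite complexes ($\id_R(C)<\infty$ iff $\Ext^i_R(k,C)=0$ for $i\gg 0$). With that spelled out or given a precise citation, your proof is complete and correct. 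As a small stylistic point, your handling of the free case (via injectivity of base change) replaces the paper's Poincar\'e-series argument but is equivalent and arguably cleaner.
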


\begin{proof}
Let $\x\in\m_R$ be an $R$-regular sequence such that $\ol R:=R/\x R$ is a fiber product.
As we mentioned in~\ref{para170508e}, the map $\s(R)\to\s(\ol R)$ induced by base-change is 1-1.
By~\cite[Corollary~4.6]{nasseh:vetfp} we have $\card(\s(\ol R))\leq 2$. More precisely, $\s(\ol R)$ consists of the free $\ol R$-module $\ol R^1$ and dualizing $\ol R$-complex (if it exists).
Hence, if $C\in\s(R)$, then $\ol C:=\ol R\lotimes_RC$ is shift-isomorphic to $\ol R$ or
it is dualizing for $\ol R$, in case that $\ol R$ has a dualizing complex.
In the first case, $C\simeq R$ up to a shift by, e.g., the standard equality of Poincar\'e series
$P^{\ol R}_{\ol C}(t)=P^R_C(t)$; for this equality see, for instance, \cite[(1.5.3) Lemma]{avramov:rhafgd}.
In the second case, $C$ must be dualizing for $R$ by \cite[(5.1)~Theorem]{avramov:glh} since every deformation is a Gorenstein local homomorphism.
\end{proof}

In order to prove Theorem~\ref{cor170213a} as a generalization of Proposition~\ref{prop171127a}, we need some more preliminary results,
beginning with a useful lemma that one can possibly deduce from results in~\cite{FSWW}. (Here, $\len_R(M)$ denotes the length of an $R$-module $M$.)

\begin{lem}\label{lem170213a}
Let $\vf\colon R\to S$ be a flat local ring homomorphism, and assume that the induced map $k\to S/\m_R S$ is an isomorphism.
Then, the induced map $\comp\phi\colon\comp R\to \comp S$ is also an isomorphism.
In particular, if $R$ is complete, then $\vf$ is an isomorphism.
\end{lem}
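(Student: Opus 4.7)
The plan is to prove the lemma by reducing the statement to an induction on powers of the maximal ideal, exploiting flatness. The preliminary observation is that locality of $\vf$ gives $\m_R S\subseteq\m_S$, while the hypothesis says $S/\m_R S\cong k$ is a field, hence $\m_R S=\m_S$. Therefore $\comp S$ can equivalently be viewed as $\varprojlim S/\m_R^n S$, which is the form best suited to comparison with $\comp R=\varprojlim R/\m_R^n$.

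Next I would establish, by induction on $n\geq 1$, that the natural ring homomorphism $R/\m_R^n\to S/\m_R^n S$ induced by $\vf$ is an isomorphism. The case $n=1$ is the hypothesis. For the inductive step, tensor the short exact sequence $0\to\m_R^n/\m_R^{n+1}\to R/\m_R^{n+1}\to R/\m_R^n\to 0$ over $R$ with the flat $R$-algebra $S$. The right-hand term becomes $S/\m_R^n S$, which is handled by induction, and the left-hand term becomes $(\m_R^n/\m_R^{n+1})\otimes_R S$. Since $\m_R^n/\m_R^{n+1}$ is a $k$-vector space and $k\otimes_R S=S/\m_R S\cong k$ by hypothesis, we get $(\m_R^n/\m_R^{n+1})\otimes_R S\cong \m_R^n/\m_R^{n+1}$, and this is naturally identified with $\m_R^n S/\m_R^{n+1} S$ sitting inside $S/\m_R^{n+1} S$. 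A diagram chase via the five lemma finishes the induction. Passing to the inverse limit then yields that $\comp\vf\colon\comp R\to\comp S$ is an isomorphism.

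For the final assertion, suppose $R$ is complete, so that $R=\comp R$ and $\comp\vf$ is the composition $R\xrightarrow{\vf} S\xrightarrow{\iota}\comp S$, where $\iota$ is the canonical completion map. Since $\comp\vf$ is an isomorphism, $\iota$ is surjective; and $\iota$ is injective by Krull's intersection theorem applied to the noetherian local ring $S$. Thus $\iota$ is an isomorphism, so $S$ is complete and $\vf=\comp\vf$ is itself an isomorphism. I do not anticipate a genuine obstacle: the only place to be careful is the identification $(\m_R^n/\m_R^{n+1})\otimes_R S\cong \m_R^n S/\m_R^{n+1} S$, which relies on flatness of $\vf$ together with $S/\m_R S\cong k$, and the verification that the completion map is injective on a noetherian local ring.
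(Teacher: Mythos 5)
Your proof is correct, but it takes a genuinely different route to the key step from the paper's argument. The paper shows that the induced map $R/\m_R^n \to S/\m_R^n S$ is bijective by a length-counting argument: the Nagata Flatness Theorem gives $\len_S(S/\m_R^n S) = \len_R(R/\m_R^n)\cdot\len_S(S/\m_R S) = \len_R(R/\m_R^n)$, the isomorphism $k\cong S/\m_R S$ shows that composition series of $S/\m_R^n S$ over $S$ are also composition series over $R$, and combining these with the injectivity of a flat local map yields bijectivity. You instead argue by induction on $n$, tensoring $0\to\m_R^n/\m_R^{n+1}\to R/\m_R^{n+1}\to R/\m_R^n\to 0$ with the flat $R$-algebra $S$, identifying the left term as $(\m_R^n/\m_R^{n+1})\otimes_k(S/\m_R S)\cong\m_R^n/\m_R^{n+1}$, and invoking the five lemma. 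Your approach is more elementary and self-contained (no appeal to Nagata's theorem or to composition-series bookkeeping), and it exhibits the isomorphisms at each layer explicitly; the paper's approach is more compact once the multiplicativity-of-length tool is available. The treatment of the final assertion is essentially the same in both, differing only in whether one cites Krull intersection or faithful flatness to get injectivity of $S\to\comp S$.
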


The assumption that the induced map $k\to S/\m_R S$ is an isomorphism is equivalent to the following: $\m_RS=\m_S$ and $\im(\varphi)+\m_S=S$.

\begin{proof}
Since $\vf$ is flat, the Nagata Flatness Theorem~\cite[Exercise 22.1]{M} implies that
\begin{equation}\label{eq20230524a}
\len_S(S/\m_R^nS)=\len_R(R/\m_R^n)\len_S(S/\m_R S)=\len_R(R/\m_R^n)
\end{equation}
for all positive integers $n$. Using the condition $k\cong S/\m_R S$, one sees that every composition series
for $S/\m_R^n S$ over $S$ is also a composition series over $R$, hence, the equality $\len_S(S/\m_R^nS)=\len_R(S/\m_R^nS)$ holds. Therefore, by~\eqref{eq20230524a} we have
$\len_R(S/\m_R^nS)=\len_R(R/\m_R^n)$.

The induced map $R/\m_R^n\to S/\m_R^n S$ is flat and local for all integers $n\geq 1$.
In particular, this map is injective.
The previous paragraph therefore implies that this map is bijective.
Passing to the inverse limit, we conclude that the induced map $\comp R\to\comp S$ is an isomorphism.
(Note that since $\m_R S=\m_S$, the $\m_R$-adic completion of $S$ is the same as its $\m_S$-adic completion.)
In particular, if $R$ is complete, then the composition
$\comp R=R\to S\to\comp S$ is an isomorphism; since each map in the composition is flat and local (hence, injective)
it follows that they are all also surjective.
\end{proof}

The next result complements~\cite[Proposition~3.15]{sather:divisor}.

\begin{prop}\label{prop170206a}
Let $\vf\colon R\to (S,\m_S,l)$ be a complete intersection local ring homomorphism of finite flat dimension.
Assume that $R$ is complete and the induced map $k\to l$ is an isomorphism.
Then the induced maps $$\s(R)\to\s(S)\to\s(\comp S)$$ are bijective.
\end{prop}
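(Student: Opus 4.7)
The plan is to first establish injectivity of both maps and then reduce to showing that the composite $\s(R)\to\s(\comp S)$ is surjective. Injectivity of $\s(R)\to\s(S)$ and of $\s(S)\to\s(\comp S)$ follows from~\ref{para170508e}, since $\vf$ has finite flat dimension by hypothesis and the completion map $S\to\comp S$ is flat. A short diagram chase then shows that once $\s(R)\to\s(\comp S)$ is surjective, the injectivity of $\s(S)\to\s(\comp S)$ forces both arrows in the factorization $\s(R)\to\s(S)\to\s(\comp S)$ to be bijective.

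For surjectivity of the composite, I would use a Cohen factorization $R\xra{\dot\vf}R'\xra{\vf'}\comp S$ of $\grave\vf$ as in~\ref{defn20230805t}, arranged with $R'$ of the form $R[\![y_1,\ldots,y_n]\!]$. Such a shape is delivered by the AFH construction in~\cite{AFH}, which in general yields $R'=\comp R[\![y_1,\ldots,y_n]\!]$ with the $y_i$'s lifting a minimal generating set of the closed fibre $\comp S/\m_R\comp S$; the two standing hypotheses $R=\comp R$ and $k\cong l$ eliminate respectively the completion of $R$ and any residue-field adjustment, leaving exactly $R'=R[\![y_1,\ldots,y_n]\!]$. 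Because $\vf$ (and hence $\grave\vf$) is complete intersection, $\vf'$ is a deformation.

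Then I would decompose $\s(R)\to\s(\comp S)$ as $\s(R)\to\s(R')\to\s(\comp S)$ and show each arrow is a bijection. The map $\s(R')\to\s(\comp S)$, induced by the deformation $\vf'$, is bijective by the standard lifting theorem for semidualizing complexes along deformations, which underlies the arguments of~\cite[Corollary~4.6]{nasseh:vetfp} and~\cite[Proposition~3.15]{sather:divisor}. For $\s(R)\to\s(R')$, observe that the retraction $\rho\colon R'=R[\![y_1,\ldots,y_n]\!]\to R$ sending each $y_i$ to $0$ is itself a deformation, so $\rho$ induces a bijection $\s(R')\to\s(R)$. Since $\rho\dot\vf$ equals the identity on $R$, the induced map $\s(R)\to\s(R')$ is a one-sided inverse of a bijection, hence bijective.

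The main obstacle is verifying that the Cohen factorization can be arranged with $R'=R[\![y_1,\ldots,y_n]\!]$ on the nose, rather than as a proper quotient thereof, because the clean inversion via $\rho$ collapses if $R'$ is a non-trivial quotient of $R[\![y_1,\ldots,y_n]\!]$. The hypotheses of the proposition play the same role here that Lemma~\ref{lem170213a} played in the preceding discussion: completeness of $R$ eliminates any non-trivial completion layer, while triviality of the residue-field extension $k\to l$ eliminates any non-trivial extension in the fibre, together ensuring that the closed fibre of $\dot\vf$ is the regular ring $k[\![y_1,\ldots,y_n]\!]$ with no further defect.
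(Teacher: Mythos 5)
Your proof is correct, and it follows the same high-level reduction as the paper -- establish injectivity from~\ref{para170508e}, then prove surjectivity of $\s(R)\to\s(\comp S)$ via a Cohen factorization $R\to R'\to\comp S$, where surjectivity of $\s(R')\to\s(\comp S)$ along the deformation $\vf'$ is supplied by the lifting theorem of Frankild--Sather-Wagstaff. Where you diverge from the paper is in handling the left leg $\s(R)\to\s(R')$. The paper states it for an \emph{arbitrary} Cohen factorization by letting $\y\subset\m_{R'}$ lift a regular system of parameters of the closed fibre $R'/\m_R R'$, using~\cite[Corollary to Theorem 22.5]{M} to get $R\to R'/(\y)$ flat, and then invoking Lemma~\ref{lem170213a} to conclude this map is an isomorphism; the factorization $\s(R)\to\s(R')\to\s(R'/(\y))$ together with injectivity of each arrow forces both to be bijective. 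You instead \emph{normalize} the Cohen factorization to $R'=R[\![y_1,\ldots,y_n]\!]$ -- which is indeed available here precisely because $R$ is complete and $k\to l$ is an isomorphism -- and then observe that the retraction $\rho\colon R'\to R$, $y_i\mapsto 0$, is a deformation with $\rho\dot\vf=\mathrm{id}_R$; this lets you deduce bijectivity of $\s(R)\to\s(R')$ without touching Lemma~\ref{lem170213a} at all. (In fact you can trim your argument slightly: once you know $\rho_*$ is injective from~\ref{para170508e} and $\rho_*\circ\dot\vf_*=\mathrm{id}$, you already get $\rho_*$ bijective and hence $\dot\vf_*=\rho_*^{-1}$ bijective, so you do not even need a lifting theorem for $\rho$.) The paper's route is more robust in that it does not require pinning down the shape of $R'$, while yours is more constructive and sidesteps the length/inverse-limit computation in Lemma~\ref{lem170213a}. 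Both are sound; your explicit identification of $R'$ is the one nontrivial assertion you must supply, and it is correct under the two standing hypotheses.
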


\begin{proof}
As we note in~\ref{para170508e}, the induced maps $\s(R)\to\s(S)\to\s(\comp S)$ are injective, so we only need to prove
surjectivity.

Case 1: $S$ is complete and $\vf$ is flat with regular closed fibre.
Let $\y\in\m_S$ give a minimal generating sequence $\overline{\y}$ for the maximal ideal $\m_S/\m_R S$ of the regular local ring $S/\m_R S$.
Since $\vf$ is flat, the fact that $\overline{\y}$ is $S/\m_R S$-regular implies that $\y$ is $S$-regular.
Moreover, the induced map $R\to S/(\y)S$ is flat; see~\cite[Corollary to Theorem 22.5]{M}.
By construction, the maximal ideal of $S/(\y)S$ is $\m_R (S/(\y)S)$.
Thus, the map $R\to S/(\y)S$ satisfies the hypotheses of Lemma~\ref{lem170213a}.
Since $R$ is assumed to be complete, we deduce from Lemma~\ref{lem170213a} that the map
$R\to S/(\y)S$ is an isomorphism. In particular, the induced map
$\s(R)\to \s(S/(\y)S)$ is bijective.
This surjective map factors as
$\s(R)\to\s(S)\to\s(S/(\y)S)$.
Since these maps are also injective, as we have noted, it is straightforward to deduce that they are both surjective.
Since $S$ is assumed to be complete, the proof in this case is finished.

Case 2: the general case. As in the previous case, it suffices to show that the induced map
$\s(R)\to\s(\comp S)$ is surjective. So, assume without loss of generality that $S$ is complete.
Consider a Cohen factorization $R\xra{\dot\vf}R'\xra{\vf'} S$ of $\vf$.
Since $\vf$ is complete intersection of finite flat dimension, the map $\vf'$ is a deformation.
Since $R'$ is complete, by~\cite[Proposition~4.2]{frankild:sdcms} the map $\s(R')\to\s(S)$ is surjective.
Now, Case 1 implies that $\s(R)\to\s(R')$ is also surjective, so the composition $\s(R)\to\s(S)$ is surjective as well.
\end{proof}


\begin{lem}\label{lem171127a}
Let $\vf\colon R\to S$ be a local ring homomorphism, and let $X$ be a homologically finite $R$-complex.
Then the following conditions are equivalent.
\begin{enumerate}[\rm(i)]
\item \label{lem171127a1}
$X$ is dualizing for $R$ and $\vf$ is quasi-Gorenstein.
\item \label{lem171127a2}
$X\in\s(R)$, $S\lotimes_RX$ is dualizing for $S$, and $\gdim(\vf)<\infty$.
\end{enumerate}
\end{lem}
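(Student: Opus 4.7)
Both implications rest on the Bass series formalism of \ref{para20230806a} together with the notion of a dualizing complex for a homomorphism recalled in \ref{para20230808d}. My strategy is to show that each direction reduces to a single identity of formal Laurent series.

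\emph{For} \eqref{lem171127a1} $\Rightarrow$ \eqref{lem171127a2}, I would first note that a dualizing complex is in particular semidualizing, so $X\in\s(R)$, and that the hypothesis ``$\vf$ is quasi-Gorenstein'' builds in $\gdim(\vf)<\infty$ by the very definition of $I_\vf(t)$ in \ref{para20230806a}. The substantive point is that $S\lotimes_R X$ is dualizing for $S$. To see this, set $D^{\comp R}:=\comp R\lotimes_R X$, which is a dualizing $\comp R$-complex, and verify that $\shift^a S$ (where $I_\vf(t)=t^a$) satisfies the defining property of a dualizing complex for $\grave\vf$ in \ref{para20230808d}. Unpacking, this produces
\begin{equation*}
D^{\comp R}\lotimes_{\comp R}\bigl(\comp S\lotimes_S \shift^a S\bigr)\;\simeq\;\shift^a(\comp S\lotimes_R X)
\end{equation*}
as a dualizing $\comp S$-complex. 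Faithfully flat descent along the completion map $S\to\comp S$ then transfers this to $S\lotimes_R X$ being dualizing for $S$.

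\emph{For} \eqref{lem171127a2} $\Rightarrow$ \eqref{lem171127a1}, since $\gdim(\vf)<\infty$ the Bass series $I_\vf(t)$ is defined, and I would apply the more general identity
\begin{equation*}
I^{S\lotimes_R X}_S(t)\;=\;I^X_R(t)\cdot I_\vf(t),
\end{equation*}
which extends $I^S_S(t)=I^R_R(t)\,I_\vf(t)$ of \ref{para20230806a} by replacing the semidualizing $R$-complex $R$ with an arbitrary $X\in\s(R)$. Because $S\lotimes_R X$ is dualizing for $S$, its Bass series is the monomial $t^{\depth S}$. Since both factors on the right are formal Laurent series with non-negative integer coefficients, a product of two such series can be a monomial only when each factor is itself a monomial. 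A monomial $I_\vf(t)$ is exactly the quasi-Gorenstein condition. A semidualizing complex $X$ whose Bass series is a monomial is dualizing: via the Foxby--Christensen identity $I^R_R(t)=P^R_X(t)\,I^X_R(t)$ one forces $P^R_X(t)$ to be a polynomial with constant term $1$, and concludes $\id_R(X)<\infty$ from the standard Bass-series characterization of finite injective dimension for semidualizing complexes.

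\emph{Main obstacle.} The technical heart of the argument is the Bass-series identity $I^{S\lotimes_R X}_S(t)=I^X_R(t)\,I_\vf(t)$. I would prove it using a Cohen factorization $R\xra{\dot\vf}R'\xra{\vf'}\comp S$ of $\grave\vf$, splitting the calculation into two pieces. For the flat piece $\dot\vf$ with regular closed fiber of embedding dimension $e$, one has $R'\lotimes_R X\in\s(R')$ with $I^{R'\lotimes_R X}_{R'}(t)=t^e\cdot I^X_R(t)$ and $I_{\dot\vf}(t)=t^e$. For the surjective piece $\vf'$ of finite G-dimension, the equivalence $\comp S\lotimes_R X\simeq \shift^{-\gdim_{R'}(\comp S)}\rhom_{R'}(\comp S,\,R'\lotimes_R X)$ furnished by the finite G-dimension hypothesis lets one compute $I^{\comp S\lotimes_R X}_{\comp S}(t)$ in terms of $I^{R'\lotimes_R X}_{R'}(t)$ and $I_{\vf'}(t)$ by standard bookkeeping. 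Combining the two pieces via the identity $\gdim(\vf)=\gdim_{R'}(\comp S)-\edim(\dot\vf)$ from \ref{para20230806n}, and tracking shifts carefully, yields the asserted product formula.
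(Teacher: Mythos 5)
Your overall strategy for \eqref{lem171127a2}$\implies$\eqref{lem171127a1} matches the paper's: force $I^X_R(t)\cdot I_\vf(t)$ to be a single monomial, then observe that a product of Laurent series with non-negative integer coefficients can be a monomial only if each factor is. But you arrive there the hard way. The paper gets the product formula ``for free'' by chaining five known one-variable identities: the Christensen identity $I^R_R(t)=P^R_X(t)\,I^X_R(t)$ from \cite[(3.18.2)]{christensen:scatac} applied once over $R$ (to $X$) and once over $S$ (to $S\lotimes_R X$), the defining relation $I^S_S(t)=I^R_R(t)\,I_\vf(t)$, the equality $I^{S\lotimes_R X}_S(t)=t^a$ (because $S\lotimes_R X$ is dualizing), and the base-change identity $P^S_{S\lotimes_R X}(t)=P^R_X(t)$ from \cite[(1.5.3)]{avramov:rhafgd}; canceling the common factor $P^R_X(t)$ yields $I^X_R(t)\,I_\vf(t)=t^a$ immediately. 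You instead propose to establish the generalized product formula as a standalone lemma via a Cohen factorization, and that route has a genuine gap: the claimed equivalence
\begin{equation*}
\comp S\lotimes_R X\;\simeq\;\shift^{-\gdim_{R'}(\comp S)}\rhom_{R'}\bigl(\comp S,\,R'\lotimes_R X\bigr)
\end{equation*}
does not hold for an arbitrary semidualizing $R$-complex $X$ merely from $\gdim_{R'}(\comp S)<\infty$; one would need $\comp S$ to lie in the relevant Bass or Auslander class of $R'\lotimes_R X$, which is not a hypothesis here. Fortunately this machinery is unnecessary once you chain the known identities as above.

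Separately, your justification for the step ``a semidualizing complex with monomial Bass series is dualizing'' is muddled: from $I^X_R(t)=t^b$ and $I^R_R(t)=P^R_X(t)\,I^X_R(t)$ one gets $P^R_X(t)=t^{-b}I^R_R(t)$, which is certainly not a polynomial with constant term $1$ when $R$ is singular, and even if it were, that would bound the \emph{projective} dimension of $X$, not its injective dimension. The correct reference is Hartshorne's local duality characterization \cite[V.3.4]{hartshorne:rad}: a semidualizing complex is dualizing if and only if its Bass series is a monomial. For \eqref{lem171127a1}$\implies$\eqref{lem171127a2}, the paper simply cites \cite[(7.8)~Theorem]{avramov:rhafgd}; your re-derivation via the dualizing complex for $\grave\vf$ and faithfully flat descent along $S\to\comp S$ is reasonable but reproduces the content of that theorem rather than avoiding it.
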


\begin{proof}
\eqref{lem171127a1}$\implies$\eqref{lem171127a2}
follows from~\cite[(7.8)~Theorem]{avramov:rhafgd}.
%

\eqref{lem171127a2}$\implies$\eqref{lem171127a1}:
We have a series of equalities
\begin{align*}
P^R_{X}(t)I^X_R(t)I_{\vf}(t)
&=I^R_R(t)I_{\vf}(t)\\
&=I^S_S(t)\\
&=I_S^{S\lotimes_RX}(t)P^S_{S\lotimes_RX}(t)\\
&=t^aP^S_{S\lotimes_RX}(t)\\
&=t^aP^R_{X}(t)
\end{align*}
where, the first and third equalities are from~\cite[(3.18.2)]{christensen:scatac}, the second equality is~\eqref{eq20230811a}, the fourth equality
(for some $a\in\bbz$)
follows from the assumption
that $S\lotimes_RX$ is dualizing for $S$ and~\cite[V.3.4]{hartshorne:rad}, and the last equality is from~\cite[(1.5.3) Lemma]{avramov:rhafgd}. Cancellation implies that
$I^X_R(t)=t^b$ and $I_{\vf}(t)=t^c$ for some $b,c\in\bbz$;
the first of these equalities implies that $X$ is dualizing for $R$ again by~\cite[V.3.4]{hartshorne:rad}, and the second one implies that $\vf$ is quasi-Gorenstein by~\ref{para20230806a}.
\end{proof}

We are now ready to prove Theorem~\ref{cor170213a}.\vspace{4pt}

\noindent \emph{Proof of Theorem~\ref{cor170213a}.}
It suffices to show that if $C$ is a semidualizing $R$-complex such that $C\not\simeq\shift^iR$ for all $i\in\bbz$,
then $C$ is dualizing for $R$. Assume that such a $C$ is given.
We can assume without loss of generality that each $R_i$ is complete.
Note that this does not change the properties of the maps in the diagram nor the assumption about $R_n$.
We now argue by induction on $n\geq 0$.

The base case $n=0$ has been covered by Proposition~\ref{prop171127a}.

For the induction step, let $n\geq 2$, noting that the shape of the given diagram implies that $n$ is even.
Since the ring homomorphism $R_0\to R_1$ has finite complete intersection dimension, it follows from~\cite[(5.1)~Theorem]{christensen:scatac} and~\cite[Theorem~6.1(a)]{sather:cidfc} that $C_1:=R_1\lotimes_RC\in\s(R_1)$.
Then, Proposition~\ref{prop170206a} implies that there is a semidualizing $R_2$-complex $C_2$ such that
$C_1\simeq R_1\lotimes_{R_2}C_2$.
The standard equality of Poincar\'e series
$P^R_C(t)=P^{R_1}_{C_1}(t)=P^{R_2}_{C_2}(t)$ implies that for all $i\in\bbz$ we have $C_2\not\simeq\shift^iR_2$.
By our induction hypothesis, we conclude that $C_2$ is dualizing for $R_2$.
The fact that the local ring homomorphism $R_2\to R_1$ is complete intersection of finite flat dimension implies that it is Gorenstein. Therefore, \cite[(5.1)~Theorem]{avramov:glh}
implies that $C_1$ is dualizing for $R_1$.
Since the map $R_0\to R_1$ has finite complete intersection dimension, it has finite Gorenstein dimension. Hence, an application of Lemma~\ref{lem171127a}
shows that $C$ is dualizing for $R$, as desired.
%
%
\qed\vspace{4pt}

In light of the conclusions of Theorem~\ref{cor170213a}, it is clear that the hypotheses are restrictive.
The next result is another indication of this.

\begin{cor}\label{cor171127a}
Under the assumptions of Theorem~\ref{cor170213a}, either the ring $R$ is Gorenstein or each ring homomorphism $\vf_i\colon R_{2i}\to R_{2i+1}$ with $0\leq i\leq (n-2)/2$ is quasi-Gorenstein.
\end{cor}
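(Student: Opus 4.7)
The plan is to assume $R$ is not Gorenstein and use the machinery from the proof of Theorem~\ref{cor170213a} to equip each $R_i$ with a dualizing complex, then apply Lemma~\ref{lem171127a} at each rightward map $\vf_i$ to conclude it is quasi-Gorenstein. First, following the reduction in the proof of Theorem~\ref{cor170213a}, I would assume without loss of generality that each $R_i$ is complete; since $R=R_0$ is then complete and non-Gorenstein, Grothendieck's theorem provides a dualizing complex $D_0$ for $R_0$, and the non-Gorenstein hypothesis forces $D_0\not\simeq\shift^j R_0$ for any $j\in\bbz$.

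Next, I would feed $D_0$ into the inductive construction used in the proof of Theorem~\ref{cor170213a}. Setting $D_{2j+1}:=R_{2j+1}\lotimes_{R_{2j}}D_{2j}$ along each rightward map (a semidualizing complex by \cite[(5.1)~Theorem]{christensen:scatac} and \cite[Theorem~6.1(a)]{sather:cidfc}) and invoking Proposition~\ref{prop170206a} along each leftward map to produce $D_{2j+2}\in\s(R_{2j+2})$ with $D_{2j+1}\simeq R_{2j+1}\lotimes_{R_{2j+2}}D_{2j+2}$ yields a sequence $D_0,\dots,D_n\in\s(R_0)\times\cdots\times\s(R_n)$. The Poincar\'e series chain $P^{R_{2j}}_{D_{2j}}(t)=P^{R_{2j+1}}_{D_{2j+1}}(t)=P^{R_{2j+2}}_{D_{2j+2}}(t)$ then shows none of the $D_i$ is shift-isomorphic to $R_i$, and the inductive argument inside the proof of Theorem~\ref{cor170213a} (propagating through the Gorenstein property of the leftward maps together with Lemma~\ref{lem171127a}) upgrades each $D_i$ from semidualizing to dualizing for $R_i$.

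Finally, fix $i$ with $0\leq i\leq (n-2)/2$. I would apply Lemma~\ref{lem171127a} to $\vf_i\colon R_{2i}\to R_{2i+1}$ with $X=D_{2i}$: condition (ii) holds because $D_{2i}\in\s(R_{2i})$, the base change $R_{2i+1}\lotimes_{R_{2i}}D_{2i}\simeq D_{2i+1}$ is dualizing for $R_{2i+1}$, and $\gdim(\vf_i)<\infty$ since the hypothesis $\cidim(\vf_i)<\infty$ together with the module-level inequality $\gdim\leq\cidim$ applied inside any Cohen factorization of $\grave\vf_i$ forces $\gdim_{R'}(\comp{R_{2i+1}})<\infty$. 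The implication (ii)$\Rightarrow$(i) of Lemma~\ref{lem171127a} then gives that $\vf_i$ is quasi-Gorenstein.

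The main obstacle I foresee is cleanly tracking the \emph{dualizing} (rather than merely semidualizing) property along the zig-zag, since Lemma~\ref{lem171127a} demands dualizing at both the source and the target. However, this is precisely what the induction inside the proof of Theorem~\ref{cor170213a} delivers at every node, so the essential remaining work is to extract the $D_i$'s from that induction and perform one additional invocation of Lemma~\ref{lem171127a} at each rightward step.
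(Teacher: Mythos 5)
Your proof is correct, but it takes a genuinely different route from the paper's. The paper's proof proceeds by induction on $n$: for the base of the inductive step it invokes the dualizing complex $D^{\vf_0}$ \emph{of the ring homomorphism} $\vf_0$ (using~\cite[(5.3)]{avramov:rhafgd}), records the defining relation $D_1\simeq D_0\lotimes_R D^{\vf_0}$, and then uses Theorem~\ref{cor170213a} together with Proposition~\ref{prop170206a} to see that $D^{\vf_0}$ is shift-isomorphic to either $R_1$ or $D_1$; the $D_1$ case is eliminated by a Poincar\'e series computation that would force $D_0\simeq\shift^i R_0$, and the $R_1$ case gives quasi-Gorensteinness via~\cite[(7.8)~Theorem]{avramov:rhafgd}. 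The induction is kept going by showing $R_2$ cannot be Gorenstein (otherwise $\card(\s(R_2))=\card(\s(R_1))=1$ forces $R$ Gorenstein). Your argument instead threads a single chain of complexes $D_0,D_1,\ldots,D_n$ along the zig-zag exactly as in the proof of Theorem~\ref{cor170213a}, establishes via the Poincar\'e series chain and the truncated-diagram version of that theorem that each $D_i$ is dualizing, and then makes \emph{one uniform application} of Lemma~\ref{lem171127a}, implication (ii)$\Rightarrow$(i), at each rightward map. This avoids the dichotomy-plus-contradiction step on $D^{\vf_0}$ entirely and never needs to separately argue that the intermediate $R_{2i}$ are non-Gorenstein (that information is already encoded in $D_{2i}\not\simeq\shift^j R_{2i}$, which follows from the preserved Poincar\'e series). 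The paper's route is slightly more self-contained in the inductive step, while yours trades that for a cleaner, one-shot invocation of the lemma once the dualizing chain is in hand; the price, which you correctly flag, is that you must carefully extract the ``each $D_i$ is dualizing'' assertion from inside Theorem~\ref{cor170213a}'s proof (or re-apply that theorem to the truncated diagrams $R_{2i}\to R_{2i+1}\from\cdots\from R_n$, together with~\cite[(5.1)~Theorem]{avramov:glh} for odd indices), but this works.
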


\begin{proof}
As in the proof of Theorem~\ref{cor170213a}, we can assume without loss of generality that each $R_i$ is complete.
In particular, each $R_i$ has a dualizing complex $D_i$.
Assume that $R=R_0$ is not Gorenstein, so $D_0\not\simeq \shift^iR_0$ for all $i\in\bbz$.
We show by induction on $n$ that each map $\vf_i$ is quasi-Gorenstein.

The base case $n=0$ holds vacuously.
For the induction step, let $n\geq 2$, noting that the shape of the given diagram implies that $n$ is even.
Since $R_1$ is complete, by~\cite[(5.3)]{avramov:rhafgd} (see also~\ref{para20230808d}) the ring homomorphism $\vf_0$ has a dualizing complex $D^{\vf_0}$.
By definition, this means that
$D^{\vf_0}$ is a semidualizing $R_1$-complex such that
\begin{equation}\label{eq171127a}
D_1\simeq (D_0\lotimes_RR_1)\lotimes_{R_1}D^{\vf_0}\simeq D_0\lotimes_RD^{\vf_0}.
\end{equation}
Since $R_1$ is complete, $R_1\from R_2$ has finite flat dimension because it is a complete intersection ring homomorphism. On the other hand, by Theorem~\ref{cor170213a} we have $\card(\s(R_2))\leq 2$. Hence, by Proposition~\ref{prop170206a} we have $D^{\vf_0}\simeq\shift^iR_1$ or $D^{\vf_0}\simeq\shift^iD_1$ for some integer $i$.
If $D^{\vf_0}\simeq\shift^iD_1$, then taking Poincar\'e series in~\eqref{eq171127a} we have
\begin{align*}
P^{R_1}_{D_1}(t)
=P^{R}_{D_0}(t)P^{R_1}_{D^{\varphi_0}}(t)
=t^iP^{R}_{D_0}(t)P^{R_1}_{D_1}(t)
\end{align*}
where the left equality comes from~\cite[(1.5.3) Lemma]{avramov:rhafgd}. It follows that $P^{R}_{D_0}(t)=t^{-i}$, and therefore, $D_0\simeq \shift^iR_0$, which is a contradiction.

Hence, we must have $D^{\vf_0}\simeq\shift^iR_1$  for some $i\in\bbz$.
In other words, $R_1$ is a dualizing complex for $\vf_0$. Thus, $\vf_0$ is
quasi-Gorenstein by~\cite[(7.8)~Theorem]{avramov:rhafgd}.

For our induction argument, it remains to show that $R_2$ is not Gorenstein.
To this end, suppose by way of contradiction that $R_2$ were Gorenstein.
Then, Proposition~\ref{prop170206a} implies that $1=\card(\s(R_2))=\card(\s(R_1))$, i.e., $R_1$ is Gorenstein.
It follows that $R$ is Gorenstein by~\cite[(7.7.2)]{avramov:rhafgd}, which is a contradiction.
%
%
%
\end{proof}


\providecommand{\bysame}{\leavevmode\hbox to3em{\hrulefill}\thinspace}
\providecommand{\MR}{\relax\ifhmode\unskip\space\fi MR }
\providecommand{\MRhref}[2]{%
  \href{http://www.ams.org/mathscinet-getitem?mr=#1}{#2}
}
\providecommand{\href}[2]{#2}

\end{document}